\newcommand{\calH}{\mathcal H}
\newcommand{\Q}{\mathbb Q}
\newcommand{\C}{\mathbb C}
\newcommand{\Z}{\mathbb Z}
\renewcommand{\phi}{\varphi}
\newcommand{\eps}{\varepsilon}
\newcommand{\PP}{\mathbb P}
\newcommand{\frakH}{\mathfrak H}
\newcommand{\calO}{\mathcal O}
\newcommand{\calM}{\mathcal M}
\newcommand{\calA}{\mathcal A}
\newcommand{\bs}{\backslash}
\newcommand{\bmx}{\left( \begin{matrix}}
\newcommand{\emx}{\end{matrix} \right)}
\newcommand{\red}{\mathrm{red}} 
\renewcommand{\Im}{\mathrm{Im \, }}
\DeclareMathOperator{\SL}{SL}
\DeclareMathOperator{\End}{End}
\DeclareMathOperator{\Jac}{Jac} 
\DeclareMathOperator{\Gal}{Gal} 
\DeclareMathOperator{\Aut}{Aut} 
\DeclareMathOperator{\disc}{disc}
\newtheorem{lemma}{Lemma}
\numberwithin{lemma}{section}
\newtheorem{prop}[lemma]{Proposition}
\crefname{prop}{Proposition}{Propositions}
\newtheorem{thm}[lemma]{Theorem}
\theoremstyle{definition}
\newtheorem{ex}[lemma]{Example}
\theoremstyle{remark}
\newtheorem{rem}[lemma]{Remark}
\numberwithin{equation}{section}
\begin{document}

\title{Moduli for rational genus 2 curves with real multiplication for
discriminant 5}
\author{Alex Cowan}
\address{Department of Mathematics, Harvard University, Cambridge, MA 02138 USA}
\email{cowan@math.harvard.edu}

\author{Kimball Martin}
\address{Department of Mathematics, University of Oklahoma, Norman, OK 73019 USA}
\email{kimball.martin@ou.edu}

\date{\today}

\begin{abstract}
Principally polarized abelian surfaces with prescribed real multiplication (RM)
are parametrized by certain Hilbert modular surfaces.  
Thus rational genus 2 curves correspond to rational points
on the Hilbert modular surfaces via their Jacobians, 
but the converse is not true.  We
give a simple generic description of which rational moduli points 
correspond to rational curves, as well as give associated 
Weierstrass models, in the case of RM by the ring of integers
of $\Q(\sqrt 5)$. To prove this, we provide some techniques 
for reducing quadratic forms over polynomial rings.
\end{abstract}

\maketitle

%
%

\section{Introduction}

We are interested in describing the space of rational genus 2 curves
which have certain endomorphism structure on their Jacobians,
and will correspond to modular forms.

Let $k$ be a field.
Let $D > 0$ be a discriminant, and $\calO_D$ the quadratic order
of discriminant $D$.  For an abelian surface $A/k$,
if $\calO_D$ embeds in $\End_k(A)$, we say $A$ has real
multiplication (RM) by $\calO_D$, and abbreviate this as RM-$D$.
By extension, if $C$ is a genus 2 curve and $A = \Jac(C)$ has 
 RM-$D$,  we say $C$ has RM-$D$.

Typically Jacobians of genus 2 curves, and more generally
abelian surfaces, will have endomorphism ring $\Z$.  
One interest in abelian surfaces $A$ with RM 
(i.e., RM-$D$ for some $D$) is that they are
of GL(2) type, which by work of Ribet \cite{ribet} and
the proof of Serre's conjecture \cite{KW}, means that abelian
surfaces $A$ with RM over $k=\Q$ correspond to elliptic modular 
forms of weight 2.

Parametrizing genus 2 curves, with or without an RM condition,
is essentially understood over $k=\C$, but much less clear over
$k=\Q$.  In this paper, we give a relatively simple generic description of
moduli for genus 2 curves $C$ with RM-5 over $\Q$.

\begin{thm} \label{thm:main}
The $\C$-isomorphism classes of
genus $2$ curves $C/\Q$ with RM-5 are generically parametrized by
$(m,n) \in \Q^2$ such that $m^2-5n^2 - 5$ is a norm from 
$\Q(\sqrt 5)$.
\end{thm}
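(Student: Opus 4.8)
The plan is to realize the moduli problem concretely, transport it to an explicit rational surface, and isolate the arithmetic defect as a single conic. First I would identify the relevant moduli space: principally polarized abelian surfaces $(A,\lambda)$ with an embedding $\calO_5\hookrightarrow\End(A)$ are parametrized by the Hilbert modular surface $Y$ attached to $\Q(\sqrt5)$, whose image in $\calA_2$ is the Humbert surface $H_5$. For discriminant $5$ this surface is classically rational over $\Q$, so I would fix an explicit birational map $Y\to\PP^2$ and take $(m,n)\in\Q^2$ as affine coordinates. A generic rational point then yields a ppav with RM-$5$ whose field of moduli is $\Q$ and which, away from the reducible (product) locus, is geometrically the Jacobian of a genus $2$ curve $C$ with field of moduli $\Q$.

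The crux is that having field of moduli $\Q$ does not force $C$ to be defined over $\Q$; this is precisely the failure of the converse flagged in the abstract. Since a generic such $C$ has geometric automorphism group $\{\pm1\}$, the obstruction to descending $C$ from $\overline{\Q}$ to its field of moduli lies in $H^2(\Gal(\overline{\Q}/\Q),\{\pm1\})=\mathrm{Br}(\Q)[2]$ and is represented by a quaternion algebra, concretely by Mestre's conic. Thus $C$ admits a model over $\Q$ if and only if this conic is isotropic over $\Q$, and the theorem reduces to computing the conic as a function of $(m,n)$ and deciding when it has a rational point.

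To carry this out I would follow Mestre's algorithm: from the invariants of the point $(m,n)$ I build the associated ternary quadratic form together with its accompanying cubic, whose combination produces a sextic model $y^2=f(x)$ once the conic is rationally parametrized. The coefficients are rational functions of $m$ and $n$, so the task becomes one of reducing a quadratic form over the ring $\Q[m,n]$ to a normal form in which isotropy is transparent. This is exactly where the paper's techniques for reducing quadratic forms over polynomial rings enter, and I expect the conic to be equivalent over $\Q$ to the norm form of the quaternion algebra $(5,\,m^2-5n^2-5)$. Since $(5,b)$ splits over $\Q$ precisely when $b$ is a norm from $\Q(\sqrt5)$, isotropy is equivalent to the stated condition, and a rational point on the conic feeds back through Mestre's construction to give the explicit Weierstrass model over $\Q$.

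The main obstacle is this explicit reduction: producing workable coordinates on $H_5$ and tracking the invariants so that the Mestre conic collapses to a recognizable binary-norm shape demands careful control of denominators and square factors over $\Q[m,n]$, which is why a clean reduction theory for quadratic forms over a polynomial ring must be set up first. A secondary but essential point is the bookkeeping behind the word \emph{generically}: I must excise the product locus, the points carrying extra automorphisms (where $\{\pm1\}$ is not the full automorphism group, so the obstruction group changes), and the indeterminacy locus of the chosen parametrization, and then verify that on the complement the correspondence between pairs $(m,n)$ satisfying the norm condition and genus $2$ curves $C/\Q$ with RM-$5$ is exact.
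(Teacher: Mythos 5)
Your route is the paper's own: take the Elkies--Kumar rational model of $Y_-(5)$ with affine coordinates $(m,n)$, realize the descent obstruction for a generic moduli point as the Mestre conic, and reduce that conic over $\Q[m,n]$ until isotropy becomes transparent. Your expected normal form is also correct: the paper's reduction in \cref{subsection:conic-reduction-generic} ends at $x_1^2 - 5x_2^2 + (m^2-5n^2-5)x_3^2 = 0$, whose isotropy over $\Q$ is equivalent to the splitting of the quaternion algebra $(5,\, m^2-5n^2-5)$, the sign being harmless because $-1$ is a norm from $\Q(\sqrt 5)$. One warning about execution: the authors could not carry out this reduction working over $\Q[m,n]$ alone; they had to start in the $(g,h)$ model \eqref{eq:EK}, where the conic has much smaller polynomial degree, and pass to $(m,n)$ only partway through, and the final form is not even $\Q(g,h)$-equivalent to the original conic. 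So the specialization argument genuinely requires excluding the vanishing locus of the discriminant, as you anticipate, and those excluded loci must then be treated by hand (the paper does this in \cref{sec:infty}, \cref{sec:gh2mn}, and \cref{sec:mest-sing}).

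The genuine gap is at the last step. Isotropy of the Mestre conic gives a model of the \emph{curve} $C$ over $\Q$; but the theorem concerns curves with \emph{RM-5 over} $\Q$, i.e.\ with $\calO_5 \hookrightarrow \End_\Q(\Jac(C))$, and a field of definition of $C$ need not be a field of definition of the RM. This is not a pedantic distinction: the paper points out that among the $\Q$-models (twists) of one and the same $\C$-isomorphism class, some can carry the RM over $\Q$ and some cannot. Your proposal never supplies the descent of the endomorphisms, so as written it only produces a correspondence with curves over $\Q$ having \emph{potential} RM-5. The paper closes this gap with \cref{prop:RMdef}: if the moduli point on $Y_-(5)$ is $\Q$-rational, then for every $\sigma$ the conjugate $j(\eta)^\sigma$ differs from $j(\eta)$ by an inner automorphism of $\End_\C(A)$, so whenever $\End_\C(A)$ is commutative---the generic situation, where $\End_\C(A) \simeq \calO_5$---Galois fixes $j(\eta)$ and the RM is defined over $\Q$. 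Some argument of this kind (or a substitute such as Wilson's Galois-group criterion, \cref{prop:wilson}) is needed to get from ``the Mestre obstruction vanishes'' to the statement of the theorem.
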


This parametrization is in terms of birational coordinates for
the Hilbert modular surface $Y(5)$, and the invariants for the
curve $C$ are then polynomial expressions in $m$ and $n$.
We will also describe models for these curves (\cref{prop:model}), 
and be more precise about the meaning of ``generically parametrized'' 
here (see \cref{thm:moduli} and \cref{sec:gh2mn}).
These results
extend to arbitrary subfields $k$ of $\C$, and the models
are rather simple when $k \supseteq \Q(\sqrt 5)$.

In order to explain our results more completely, we will first
describe moduli for genus 2 curves over $\C$ in more detail.
Below, when the field of definition of a curve or variety is not
specified, it is assumed to be $\C$.

Let $\calM_2$ be the (coarse) moduli space of genus 2 curves 
and $\calA_2$ be the moduli space of principally 
polarized abelian surfaces.  
The Torelli map $\calM_2 \to \calA_2$, corresponding
to sending a genus 2 curve $C$ to its Jacobian $A = \Jac(C)$,
is almost surjective---the complement of its image consists of
(moduli for) products of 2 elliptic curves.  
We may identify a point in $\calM_2$ corresponding to a genus
2 curve $C$ with Igusa--Clebsch invariants
$(I_2 : I_4 : I_6 : I_{10})$ in weighted projective space 
$\mathbb P^3_{1,2,3,5}(\C)$.  Each $(I_2 : I_4 : I_6 : I_{10})$
with $I_{10} \ne 0$ comes from a genus 2 curve.

The Igusa--Clebsch invariants $I_{2j}$ can be defined as degree $2j$
polynomial functions $I_{2j}(f)$ of the coefficients of a 
sextic Weierstrass equation $y^2 = f(x)$ for $C$, 
and up to projective equivalence do not depend on the model.   
Consequently, if $C$ has a model over a subfield $k \subseteq \C$, 
then the Igusa--Clebsch invariants are defined over $k$ (i.e., can
all be taken in $k$ after scaling).

However, the converse is not true.  
(Contrast this to the
genus 1 situation: an elliptic curve has a rational model if and only if
its $j$-invariant is rational.)
If $C$ is a genus 2 curve without extra automorphisms over 
$\C$ and its Igusa--Clebsch invariants
are defined over $k$, then Mestre \cite{mestre} showed that
$C$ is defined over $k$ if and only if a certain conic $L/k$ has a
$k$-rational point.  (If $C$ has extra automorphisms, it has a model
over $k$ by \cite{cardona-quer}.)
The coefficients of the Mestre conic $L$ are polynomials in $I_2, I_4, I_6$ and $I_{10}$.
Nonetheless, there is no simple characterization
of when the Mestre obstruction vanishes, i.e., when $L$ has 
a $k$-rational point.

Now we review moduli for genus 2 curves with RM-$D$ over $\C$.
For simplicity, assume $D$ is a fundamental discriminant, 
so $\calO_D$ is the ring of integers of $\Q(\sqrt D)$.  
The Hilbert modular surface
$Y_-(D)$ is a smooth compactification of the quotient
$\SL_2(\calO_D) \bs (\frakH^+ \times \frakH^-)$,
or alternatively
$\SL_2(\calO_D \oplus \calO_D^*) \bs (\frakH^+ \times \frakH^+)$,
where $\calO_D^*$ is the inverse different of $\calO_D$
(e.g., see \cite{vdG}).  Then $Y_-(D)$ is a coarse moduli
space for principally polarized abelian surfaces with real multiplication
RM-$D$, where one fixes an action of $\calO_D$ compatible
with the polarization.  

Suppose $k \subseteq \C$.
A genus 2 curve $C/k$ with RM-$D$ corresponds to a $k$-rational
point on $Y_-(D)$.  Again, the converse is not true.  If $p$
is a rational point on $Y_-(D)$ which does not correspond
to the product of 2 elliptic curves, then it corresponds to a curve
$C$ with RM-$D$ over $\C$.  For $p$ to correspond to
a curve over $k$ with RM-$D$ we need both that the Mestre
obstruction vanishes, and that some rational model for $C$ 
has RM-$D$ defined over $k$.  (It can happen that some 
$k$-rational models for $C$ have RM defined over $k$
and some do not.)  
We will see that generically if the Mestre obstruction vanishes, 
then the RM is defined over $k$.  More precisely, if
$\End(\Jac(C))$ is commutative, then a field of definition for $C$
is a field of definition for the RM (\cref{prop:RMdef}).
  
 \subsection{Strategy of proof}
 In the special case of RM-5, the Hilbert modular surface
$Y(5) = Y_-(5)$ is a rational surface, i.e., birational to 
$\mathbb P^2_{m,n}(\C)$.
 Hence to prove \cref{thm:main}, it suffices to show that
 the vanishing of the Mestre obstruction at a rational point $(m,n)$
 in $Y(5)$ is generically equivalent to the condition that
 $m^2 - 5n^2 - 5 = u^2 - 5v^2$ for some $u, v \in \Q$.
 This is not at all obvious from the Mestre conic,
 which is a conic over $\Q[m,n]$ whose coefficients are
 degree $\le 14$ polynomials in $m$ and $n$, and whose
 discriminant is of degree 30.  
 In fact, it was rather surprising to us that there was such a 
 simple characterization of the Mestre obstruction.  It was
 only through computational observations that we were led
 to believe in \cref{thm:main}, and then were able to find
 a proof after much trial.
 
 The starting point for the proof relies on two birational models for
 $Y_-(5)$ due to Elkies and Kumar \cite{EK}, which were obtained
 by studying lattice polarizations of K3 surfaces.  The
 first model is a double cover of $\mathbb P^2_{g,h}$ of the form
 $z^2 = f(g,h)$, where $f$ is a degree 5 polynomial in $g$ and $h$.
 In this model, the norm condition in \cref{thm:main} can be 
 restated as $30g+4$ being a norm from $\Q(\sqrt 5)$.  In particular,
 the Mestre obstruction only depends on $g$ and not $h$.
 (This was our initial computational observation that led to the
 theorem.)  The Igusa--Clebsch invariants now are low-degree
 expressions in $g$ and $h$.  In terms of $g$ and $h$,
 the Mestre conic has coefficients in $\Q[g,h]$ which are of degree 
 $\le 7$ in $g$ and degree $\le 2$ in $h$, and its discriminant
 is an integer multiple of $h^2(8h-9g^2)^2 z^2$.
 
 To our knowledge, there are no general methods to reduce
 quadratic forms over polynomial rings.
 The standard technique taught to ``simplify'' quadratic forms over
 fields is diagonalization, but unless one is very lucky this is
 not useful in simplifying quadratic forms over rings.  
 E.g., diagonalizing the conic over $\Q(m,n)$ and
 clearing denominators gives coefficients which are polynomials
 of degrees 24, 28 and 32 in $m$ and $n$.
 
 We describe a few simple techniques to reduce degrees of
 polynomial coefficients and 
 remove factors from the discriminant, which we hope may be of
 use in other situations.  In our case,
 we are able to use these methods to reduce the the Mestre conic
 in $g$ and $h$ to have polynomial coefficients of degree $\le 3$
 and remove the factors of $h^2$ and $(8h-9g^2)$ from the
 discriminant.  Then we switch to the $(m,n)$ model and
 apply our techniques to reduce the Mestre conic over
 $\Q(m,n)$ to $x_1^2 - 5x_2^2 + (m^2 - 5n^2 - 5)x_3^2 = 0$, which proves
 \cref{thm:main}.
 
 We remark that we needed to use both of these models
 for $Y_-(5)$ to carry out this reduction of the Mestre conic.
 While the Mestre conic is simpler in $g$ and $h$, our final
 reduced form, which is the same as $x_1^2 - 5x_3^2 + (30g+4)x_3^2 = 0$,
 is \emph{not} equivalent to the original Mestre conic over $\Q(g,h)$.
That is, these conics are not equivalent over $\Q$ for a generic choice
of $g, h \in \Q$---the equivalence requires rational $g, h$
such that $f(g,h)$ is a rational square, i.e., $g$ and $h$ come from a 
rational point on $Y_-(5)$, and it is not clear how to use the
relation $z^2 = f(g,h)$ to carry out this reduction solely in
terms of $g$ and $h$.  On the other hand, we were unable
to carry out the reduction entirely in terms of $m$ and $n$
because finding suitable changes of variables is more difficult
with higher degree polynomial coefficients.

\subsection{Moduli of rational curves}
Here we briefly describe to what extent we can make the
``generic'' aspect of \cref{thm:main} precise.  First, our reduction
of the Mestre conic $L$ over $\Q(m,n)$ does not give a 
$\Q$-equivalent conic when specializing to points $(m,n) \in \Q^2$
such that $\disc L = 0$.  This happens on a finite number of
curves in the moduli space, which we examine separately.
 
Second, as $(m,n)$ are only affine coordinates for a birational model 
for $Y_-(5)$, the set of rational $(m,n)$ does not exhaust
the rational points on $Y_-(5)$.  Fortunately, thanks to
work of Wilson \cite{wilson}, we can describe Igusa--Clebsch
invariants for the remaining points on $Y_-(5)$ and say
explicitly when such points correspond to a genus 2 curve defined
over $\Q$.

Consequently, in \cref{thm:moduli} we give an explicit description 
of a set $\mathcal Y$ of rational moduli in $\mathcal M_2$ such that 
any genus 2
 curve $C/\Q$ with RM-5 corresponds to a point on $\mathcal Y$.
 Moreover, any point in $\mathcal Y$ corresponds to a genus 2 curve
 $C/\Q$ that has potential RM-5, i.e., RM-5 defined over $\bar \Q$
 but not necessarily $\Q$.
 We do not know if each such $C$ will always have a twist with RM-5
 defined over $\Q$, but we were not able to find any examples to the
 contrary.
 At least the collection of such curves generically has RM-5, and we
 explain two ways in which one can check that the RM-5 is defined
 over $\Q$.

\subsection{Models of curves}
Several families of rational genus 2 curves $C/\Q$ with RM-5
have been constructed in the literature.  For instance,
Mestre constructed a 2-parameter family in \cite{mestre:family}
and Brumer constructed a 3-parameter family (see \cite{brumer}
for an announcement, and \cite{hashimoto}
for a proof different from Brumer's).  
For a rational choice of parameters these families
generically give rational genus 2 curves $C$ with RM-5 over $\Q$.
Moreover, over $\C$ these families are known to exhaust
all $\C$-isomorphism classes of genus 2 curves $C/\Q$ with
RM-5 (see \cite{hashimoto-sakai} for Brumer's family and
\cite{wilson} or \cite{sakai} for Mestre's family).  
However, it is not known how to describe all such rational
curves with these families, or
how to describe what parameters give $\C$-isomorphic curves.  

\cref{thm:main} generically parametrizes 
such $C/\Q$.  If $(m,n) \in \Q^2$ such that
$m^2-5n^2-5 = u^2-5v^2$ with $u, v \in \Q$, we give
a generic Weierstrass model $y^2 = f(x)$ for an associated
curve in terms of $(m,n,u,v)$.  See \cref{prop:model}.
These results apply arbitrary base fields $k \subseteq \C$.
If $k \supseteq \Q(\sqrt 5)$,
then the analogous norm condition in \cref{thm:main}
is automatically satisfied, and one can write down a 
model solely in terms of $(m,n) \in k^2$.  See
\cref{prop:modelQsqrt5}.

\subsection{Additional remarks}
Our original motivation for this project was to help understand
weight 2 elliptic modular forms with rationality field $\Q(\sqrt 5)$.
We hope to return to this in the future.

In \cref{sec:otherD}, we briefly describe some computational
evidence that there are similarly simple descriptions
for when the Mestre obstruction vanishes for some other
small values of $D$.  However, in these cases, the
Mestre conics that arise are more complicated and we have
only been partially successful in applying our reduction methods
to these cases.

Calculations for this project were carried out in
Sage \cite{sage} and Magma \cite{magma}.

\subsection*{Acknowledgements}
We are particularly grateful to Noam Elkies for many helpful
discussions and comments.  We also thank Armand Brumer and
John Voight for useful discussions.
Both authors were supported by grants from
the Simons Foundation (550031 for AC, and 512927 for KM).
Part of this work was carried out while the
second author was visiting MIT and Harvard, and he thanks
them for their hospitality.

%
%

\section{Moduli spaces}

Henceforth, $k$ denotes a subfield of $\C$.

Let $C$ be a genus 2 curve defined over $k$.  Then it
has a rational Weierstrass model of the form $y^2 = f(x)$, where
$f(x) \in k[x]$ is a sextic with no repeated irreducible factors.  
The Igusa--Clebsch
invariants $I_2, I_4, I_6, I_{10}$ are polynomial invariants of $f$ of
respective degrees $2, 4, 6, 10$ with $I_{10} = \disc(f)$.  We view
the Igusa--Clebsch invariants as a point $(I_2 : I_4: I_6: I_{10})$ in weighted
projective space $\PP^3_{1,2,3,5}$.  In this way, the Igusa--Clebsch invariants
in $\PP^3_{1,2,3,5}$ depend only on $C$ and not on the choice of the
Weierstrass equation.  Moreover, the set of $(I_2 : I_4: I_6: I_{10})$ with
$I_{10} \ne 0$ forms a coarse moduli space $\calM_2$ for genus 2 curves.

\subsection{Hilbert modular surfaces}
Here we review some facts about certain Hilbert modular surfaces.
See \cite{vdG} and \cite{EK} for more details.  

Let $D > 0$ be a fundamental discriminant.  The Hilbert 
modular surface $Y_-(D)$ is a smooth compactification 
of the quotient 
$\SL_2(\calO_D \oplus \calO_D^*) \bs \frakH^+ \times \frakH^+$.
When $K = \Q(\sqrt D)$ has narrow class number 1, this agrees
with the Hilbert modular surface often denoted $Y(D)$.

Fix an embedding $K \subseteq \C$ and
denote by $\tau$ the nontrivial Galois automorphism of $K$.
One can associate
to $(z_1, z_2) \in \frakH^+ \times \frakH^+$ a lattice
\[ L_{(z_1,z_2)} = \{ (a z_1 + b, a^\tau z_2 + b^\tau) :
a \in \calO_D, b \in \calO_D^* \} \subseteq V = \C^2. \]
Then
\[ E((w_1, w_2), (w'_1, w'_2)) = \frac{\Im w_1 \bar w_1}{\Im z_1}
+ \frac{ \Im w_2 \bar w'_2}{\Im z_2} \]
(with bar denoting complex conjugation)
defines a Riemann form on $A = V/L_{(z_1, z_2)}$ such that
$L_{(z_1,z_2)}$ is unimodular with respect to this form.  
This makes $A$ a principally polarized abelian surface (PPAS) 
with an action of $\calO_D$ via
$j(\alpha)(w_1, w_2) = (\alpha w_1, \alpha^\tau w_2)$.
In fact, one may check that $j : \calO_D \hookrightarrow \End(A)^\dagger$,
where $\dagger$ denotes the Rosati involution.
This construction leads to the fact that $Y_-(D)$ is a moduli
space for such pairs $(A,j)$ of PPASs with RM-$D$.

  The Humbert modular surface $\calH_D$ is the image of 
$Y_-(D)$ in $\calA_2$, and the map $Y_-(D) \to \calH_D$ is generically
2-to-1, corresponding to forgetting the action of $\calO_D$.
Note that in the above construction, switching $z_1$ and $z_2$
corresponds to replacing $j$ with $j \circ \tau$, and for the points
$(z_1, z_1)$, the conjugate actions $j$ and $j \circ \tau$ are isomorphic.

If $A$ is a geometrically simple PPAS, then $\End(A)$ is isomorphic to
$\Z$, an order in a real quadratic field, an order in a quartic CM field,
or an order in an indefinite quaternion algebra. 
If $A$ is not geometrically simple, but $\calO_D$ embeds in $\End(A)$,
then $\End(A)$ is an order in either the split quaternion algebra $M_2(\Q)$ or in $M_2(F)$ where $F$ is an imaginary quadratic field,
according to whether $A$ is isogenous over $\overline{\Q}$ to a product
of isogenous elliptic curves without or with CM.

\subsection{Fields of definition}

We are interested in fields of definition of curves and endomorphisms.
In general, suppose $X$ is a coarse moduli space space for a class of
varieties $V$ satisfying some property $P$.  If $x$ corresponds to the
pair $(V,P)$, then the field of moduli for $(V,P)$ is the field of definition
of the point $x$.  If both $V$ and $P$ are defined over $k$, then
the field of moduli contains $k$, but the converse is not true in general.

In particular, if $C$ is a genus 2 curve over $\C$, then the field of moduli
of $C$ is the field of definition of $(I_2 : I_4 : I_6 : I_{10})$, i.e. 
the minimal field $k_0$ such that 
$I_2, I_4, I_6, I_{10}$ can be taken in $k_0$ after scaling.
If $C$ is defined over $k$, then $k \supseteq k_0$.
However, $C$ need not be defined over $k_0$, i.e., there need not
be a curve $C'/k_0$ such that $C'$ and $C$ are isomorphic 
over $\C$.

Generically, $\Aut(C)$ is generated by the hyperelliptic involution on $C$. If $\lvert \Aut(C) \rvert > 2$, then by \cite{cardona-quer}, $C$
is defined over $k_0$.  When $\Aut(C) \simeq C_2$, Mestre \cite{mestre}
constructed a nonsingular conic $L/k_0$ such that $C$ is defined
over $k \supseteq k_0$ if and only if $L$ has a $k$-point. The coefficients 
of $L$ are polynomials in $I_2, I_4, I_6$ and $I_{10}$---see 
\cref{section:mestre-conic-background}
for details.   We remark that since $L$ always
has a point over a quadratic extension $k'/k_0$, $C$ is always definable
over a (in fact, infinitely many) quadratic extension(s) of $k_0$.

Now consider a genus 2 curve $(C,j)$ with RM-$D$, where 
$j$ is an embedding of $\calO_D$ into $\End(A)$, $A = \Jac(C)$, 
that respects the polarization as above.  Then the field of moduli for 
$(C,j)$ is the minimal field $k_0$ such that $(A,j)$ corresponds to
a $k_0$-rational point on $Y_-(D)$.  This necessarily contains
the field of moduli for $C$ regarded as a general genus 2 curve,
and thus may be regarded as the field of moduli for the RM.

If $(C,j)$ is defined over $k$, i.e., there is a model for $C$ 
defined over $k$ such that $j(\calO_D) \subseteq \End_k(A)$,
then $k \supseteq k_0$.  Conversely, given $k \supseteq k_0$,
we would like a way to determine whether $(C,j)$ is defined
over $k$.  Necessarily, $C$ must be defined over $k$, i.e.,
the Mestre conic $L$ must have a $k$-rational point.
The following says that, generically, when the Mestre conic
has a point the RM is also defined over $k$.

\begin{prop} \label{prop:RMdef} 
Suppose $p$ is a $k$-rational point on $Y_-(D)$
corresponding to a PPAS $A$ defined over $k$ 
with an embedding $j : \calO_D \hookrightarrow \End_\C(A)$.  
If $\End_\C(A)$ is commutative,
then $j(\calO_D) \subseteq \End_k(A)$.
\end{prop}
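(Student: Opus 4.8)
The plan is to combine Galois descent for endomorphisms of abelian varieties with the observation that, once $\End_\C(A)$ is commutative, any automorphism of $A$ acts trivially by conjugation on the subring $j(\calO_D)$. Recall first that for an abelian variety $A/k$ with $k \subseteq \C$, every endomorphism is defined over $\bar k$, so $\End_\C(A) = \End_{\bar k}(A)$, and this ring carries a natural action of $G_k = \Gal(\bar k/k)$. By descent, an endomorphism lies in $\End_k(A)$ precisely when it is fixed by every $\sigma \in G_k$. It therefore suffices to show that $j(\alpha)^\sigma = j(\alpha)$ for all $\alpha \in \calO_D$ and all $\sigma \in G_k$, where $j(\alpha)^\sigma$ denotes the Galois conjugate of the endomorphism $j(\alpha)$.

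Next I would unwind what the $k$-rationality of the moduli point $p$ yields. Because $A$, together with its principal polarization $\lambda$, is defined over $k$, we have $A^\sigma = A$ and $\lambda^\sigma = \lambda$, while the conjugate embedding $j^\sigma(\alpha) := j(\alpha)^\sigma$ produces a second pair $(A, j^\sigma)$ representing the conjugate point $p^\sigma$. Since $p$ is $k$-rational, $p^\sigma = p$, so $(A,j)$ and $(A,j^\sigma)$ represent the same point of the coarse moduli space $Y_-(D)$, hence are isomorphic as pairs over $\C$. Concretely, there is an automorphism $\psi_\sigma \in \Aut(A,\lambda)$ of the polarized surface with $\psi_\sigma\, j(\alpha)\, \psi_\sigma^{-1} = j^\sigma(\alpha)$ for every $\alpha \in \calO_D$.

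The commutativity hypothesis then finishes the argument immediately. The automorphism $\psi_\sigma$ is, in particular, a unit of $\End_\C(A)$, so if $\End_\C(A)$ is commutative it is central; conjugation by $\psi_\sigma$ therefore fixes $j(\alpha)$. Combining this with the previous step gives $j(\alpha)^\sigma = \psi_\sigma\, j(\alpha)\, \psi_\sigma^{-1} = j(\alpha)$ for all $\sigma$ and all $\alpha$, which is exactly the descent criterion and yields $j(\calO_D) \subseteq \End_k(A)$.

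I expect the only delicate point to be the careful justification of the coarse-moduli step, namely that $k$-rationality of $p$ really yields an \emph{actual} $\C$-isomorphism of pairs $(A,j) \cong (A,j^\sigma)$ intertwining the two embeddings, together with the identification of the intertwining isomorphism as a polarization-preserving unit of $\End_\C(A)$; the final algebraic observation is routine once commutativity is invoked. It is also worth noting where the hypothesis is genuinely needed: when $\End_\C(A)$ is noncommutative (for instance a quaternionic case as in the classification recalled above), $\psi_\sigma$ need not centralize $j(\calO_D)$, and the conclusion can fail, which is consistent with the phenomenon flagged earlier that some $k$-rational models of $C$ carry the RM over $k$ while others do not.
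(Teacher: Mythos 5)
Your proposal is correct and takes essentially the same approach as the paper: the paper's proof likewise uses $k$-rationality of $p$ to produce an isomorphism $\phi : (A,j) \to (A^\sigma, j^\sigma)$, observes that conjugation by $\phi$ is an inner automorphism of $\End_\C(A)$ carrying $j(\eta)$ to $j(\eta)^\sigma$ (for the generator $\eta = \frac{D+\sqrt D}{2}$), and concludes from commutativity that $j(\eta)^\sigma = j(\eta)$ for all $\sigma \in G_k$. The only cosmetic difference is that the paper works with the single generator $\eta$ of $\calO_D$ rather than all $\alpha \in \calO_D$ at once.
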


\begin{proof}
Let $\sigma \in G_k$, and $\eta = \frac{D + \sqrt D}2$.  
Then $p$ being $k$-rational means
there is an isomorphism $\phi : (A, j) \to (A^\sigma, j^\sigma)$.
In particular, $\phi$ maps $j(\eta)$ to $j^\sigma(\eta) \in \End_\C(A^\sigma)$, 
which we may identify with $j(\eta)^\sigma \in \End_\C(A)$.  
Consequently,
there is an inner automorphism of $\End_\C(A)$
taking $j(\eta)$ to $j(\eta)^\sigma$.  
Hence if $\End_\C(A)$ is commutative, this means 
$j(\eta)^\sigma = j(\eta)$
for all $\sigma$, and thus $j(\eta) \in \End_k(A)$.
\end{proof}

Now we briefly address how to check the field of definition
of RM for specific curves $C$.
Suppose $C$ is defined over $k$, and let $A = \Jac(C)$. 

Algorithms for numerically computing $\End_k(A)$ and $\End_\C(A)$
have been implemented in Magma, which one can use to
provably exhibit RM-$D$ using correspondences---e.g., see \cite{KM}
or \cite{CMSV}. 

In the case we consider in this paper, $D=5$, another criterion which
is simpler to provably verify was provided by Wilson:

\begin{prop}[\cite{wilson}] \label{prop:wilson}
Let $y^2 = f(x)$ be a sextic Weierstrass model over $k$ for a genus $2$
curve $C$ with potential RM-5, i.e., $C$ has RM-5 defined over $\C$.
Then $C$ has RM-5 (defined over $k$) if and only if 
$\Gal(f) = \Gal(f/k)$ is contained in a transitive copy of $A_5$ inside $S_6$.
\end{prop}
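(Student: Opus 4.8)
The plan is to reduce the statement to a computation with the mod $2$ Galois representation, exploiting the arithmetic accident that $2$ is inert in $\Q(\sqrt5)$. Write $A=\Jac(C)$ and let $j:\calO_5\hookrightarrow\End_\C(A)$ be the (potential) RM. Since $C$ is defined over $k$, every $\sigma\in G_k$ gives an isomorphism $A\to A^\sigma=A$ carrying $j$ to $j^\sigma$, and $j^\sigma$ is again an embedding of $\calO_5$ compatible with the polarization. As there are exactly two such embeddings, $j$ and $j\circ\tau$, we obtain a quadratic character $\chi\colon G_k\to\{\pm1\}$ defined by $j(\sqrt5)^\sigma=\chi(\sigma)\,j(\sqrt5)$; by definition the RM is defined over $k$ if and only if $\chi$ is trivial (this is the commutative case of \cref{prop:RMdef}, now allowing the two embeddings to be Galois-conjugate). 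So the whole problem is to identify $\chi$ with the sign character of $\Gal(f)$ on the six roots, i.e.\ with the character cutting out the transitive $A_5\subset S_6$.

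Here is the mechanism I would use. Because $x^2-x-1$ is irreducible mod $2$, the prime $2$ is inert in $\Q(\sqrt5)$ and $\calO_5/2\cong\F_4$. Hence the potential RM makes $A[2]\cong(\Z/2)^4$ into a $2$-dimensional $\F_4$-vector space, and the Weil pairing refines to a nondegenerate alternating $\F_4$-bilinear form. The key calculation is the effect of $\tau$ on this $\F_4$-structure: writing $\calO_5=\Z[\omega]$ with $\omega=\tfrac{1+\sqrt5}2$, one has $\tau(\omega)=1-\omega\equiv\omega^2\pmod2$, so $\tau$ reduces to the Frobenius of $\F_4/\F_2$. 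Consequently each $\sigma\in G_k$ acts on $A[2]$ by an $\F_4$-\emph{semilinear} symplectic similitude whose Frobenius component is precisely $\chi(\sigma)$: it is $\F_4$-linear exactly when $\chi(\sigma)=+1$.

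Next I would feed this into the classical identification of the mod $2$ representation. For a genus $2$ Jacobian the Weil pairing gives $\rho_2\colon G_k\to\mathrm{Sp}_4(\F_2)$, and under the exceptional isomorphism $\mathrm{Sp}_4(\F_2)\cong S_6$ the image is identified with $\Gal(f)$ acting on the six Weierstrass points (the six roots of $f$). The $\F_4$-structure cuts this down: the group of $\F_4$-semilinear symplectic isometries of $A[2]$ is $\Sigma\mathrm{Sp}_2(\F_4)\cong S_5$, sitting as the transitive $S_5$ in $S_6$, and its linear part $\mathrm{Sp}_2(\F_4)=\SL_2(\F_4)\cong A_5$ is the transitive $A_5$. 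Thus potential RM-$5$ forces $\Gal(f)\subseteq S_5$ (this is the half of the statement already built into the hypothesis), and by the previous paragraph $\sigma$ lands in the linear part $A_5$ if and only if $\chi(\sigma)=+1$. Therefore $\chi$ is exactly the restriction to $\Gal(f)$ of the sign character on the six points, and $\chi$ is trivial---equivalently the RM is defined over $k$---if and only if $\Gal(f)\subseteq A_5$.

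The main obstacle is the bookkeeping inside $S_6$ rather than any single hard estimate. One must verify that the natural six-element set carrying the $\mathrm{Sp}_4(\F_2)\cong S_6$ action really is the set of Weierstrass points (matching the $S_6$-orbit correctly, since $S_6$ has an outer automorphism and two classes of transitive $S_5$), and that $\SL_2(\F_4)=\mathrm{Sp}_2(\F_4)$ embeds as the \emph{transitive} $A_5$ on those six points with the semilinear coset mapping to odd permutations. One must also check that the $\F_4$-refinement of the Weil pairing exists with a controllable similitude factor, and treat separately the non-generic cases where $\End_\C(A)$ is noncommutative (quaternionic or CM, or $A$ nonsimple): there $\calO_5$ may sit inside a larger algebra, and one has to confirm that the chosen $j$ still has $\chi$ as its field-of-definition character. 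Granting these structural points, the equivalence follows as above.
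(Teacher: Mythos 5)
The paper itself contains no proof of \cref{prop:wilson}: the proposition is quoted from Wilson's paper \cite{wilson}, whose argument goes through his explicit study of the RM correspondence on the Weierstrass points, not through the mod $2$ Galois representation. So your proposal is an independent route, and its core mechanism is in fact correct. Since $2$ is inert in $\Q(\sqrt 5)$, the potential RM makes $A[2]$ a two-dimensional $\F_4$-vector space; the Weil pairing really is the $\F_4/\F_2$-trace of an $\F_4$-symplectic form (this follows from $H_1(A,\Z) \simeq \calO_5 \oplus \calO_5^*$, visible in the paper's construction of the lattice $L_{(z_1,z_2)}$, because multiplication by $\sqrt 5$ identifies $\calO_5^*/2$ with $\calO_5/2$); and $\tau(\omega) = 1-\omega \equiv \omega^2 \pmod 2$, so a Galois element conjugating $j$ to $j\circ\tau$ acts Frobenius-semilinearly. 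The deferred bookkeeping also checks out: under $\mathrm{Sp}_4(\F_2) \cong S_6$ acting on the six odd theta characteristics (equivalently the Weierstrass points), the two conjugacy classes of $S_5$-subgroups are the stabilizers $\mathrm{O}_4^-(\F_2)$ of odd theta characteristics (intransitive) and the stabilizers $\mathrm{Sp}_2(\F_4)\rtimes\Gal(\F_4/\F_2)$ of $\F_4$-structures (transitive), swapped by the outer automorphism; hence $\SL_2(\F_4)$ is the transitive $A_5$ and the semilinear coset consists of odd permutations. Granting commutativity of $\End_\C(A)$, your identification of $\chi$ with the sign character, and the resulting equivalence, is a complete and correct argument.

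The genuine gap is the one you flag in your last paragraph but do not close: when $\End_\C(A)$ is noncommutative, your character $\chi$ does not exist. The assertion that there are exactly two polarization-compatible embeddings, $j$ and $j\circ\tau$, fails there (by Skolem--Noether, $j^\sigma$ can be any of infinitely many conjugates of $j$), so $j(\calO_5)$ need not be Galois-stable, the Galois image need not lie in the semilinear group of your chosen $\F_4$-structure, and the implication ``$\Gal(f)$ inside a transitive $A_5$ $\Rightarrow$ RM-5 defined over $k$'' is not established. (The converse implication does survive in general: a $k$-rational embedding furnishes a Galois-stable $\F_4$-structure, forcing $\Gal(f) \subseteq \SL_2(\F_4)$.) This is not a discardable edge case: the proposition covers it, and the paper invokes \cref{prop:wilson} precisely there --- in the remark following \cref{thm:moduli}, the curve $y^2 = x^6 - x^4 + 4x^2 - 1$ and its twist fall under case (3), where $\End_\C(\Jac(C))$ is an order in a $2\times 2$ matrix algebra, and both directions of the criterion are used to decide which twist has RM-5 over $\Q$. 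Closing the gap requires analyzing the Galois action on the full set of polarization-compatible embeddings (including the integrality point that a $k$-rational copy of $\Q(\sqrt 5)$ inside $\End_k(A)\otimes\Q$ must meet $\End_k(A)$ in the maximal order $\calO_5$, not merely in some suborder), which is exactly the part of Wilson's theorem your sketch does not reach.
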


It is easy to verify whether $C$ has potential RM-5, because one
can check whether it comes from a point on $Y_-(5)$ via its
Igusa--Clebsch invariants.  In particular, if $C: y^2=f(x)$ is a genus 2
curve over $k$ with $\deg f = 6$, then $C$ has RM-5 (over $k$) if
and only if its Igusa--Clebsch invariants are of one of the types
listed below in \cref{prop:necess-cond} and $\Gal(f)$ lies in one of the
transitive copies of $A_5$ inside $S_6$.

\subsection{Moduli for RM-5} \label{section:moduli}

Elkies and Kumar \cite{EK} give the following birational model for $Y_-(5)$:
\begin{equation} \label{eq:EK}
 Y : z^2 = 2 (-972 g^{5} - 324 g^{4} - 27 g^{3} - 4500 g^{2} h - 1350 g h + 6250 h^{2} - 108 h).
\end{equation}
For $(z,g,h)$ on the surface $Y$ corresponding to a point on $\calM_2 \subseteq \calA_2$, the 
Igusa--Clebsch invariants are
\[ (I_2 : I_4: I_6: I_{10}) = \left(24 g + 6 : 9 g^{2} : 81 g^{3} + 18 g^{2} + 36 h : 4 h^{2}\right) \]

The surface $Y_-(5)$ is rational, and Elkies and Kumar give a birational map
between $Y$ and $\PP^2$, with affine coordinates $(m,n)$, via
\begin{align}\label{eq:gh2mn}
\nonumber 30g + 9 &= m^2 - 5n^2 \\
h &=  m \frac{(30g+9)(15g+2)}{6250} + \frac{9(250g^2 + 75g + 6)}{6250} \\
\nonumber z &= n \frac{(30g+9)(15g+2)}{25}.
\end{align}
These equations give invertible transformations between the affine coordinates
$(z,g,h)$ on $Y$ and $(m,n)$ on $\PP^2$ outside of the locus where
$g = \frac{m^2-5n^2-9}{30}$ is $- \frac 3{10}$ or $- \frac 2{15}$.

In an alternative approach, Wilson \cite{wilson} constructed 
a coarse moduli space for genus 2 curves $C$ with RM-5
with coordinates $(z_6 : s_2 : \sigma_5) \in \PP^2_{1,2,5}$ with $\sigma_5 \ne 0$ such
that
\[ (I_2 : I_4: I_6: I_{10}) = \left(-2s_2 + 2z_6^2 : \frac{(s_2 + 2z_6^2)^2}{16} 
: \frac{9z_6 \sigma_5 - 4I_4(3s_2 - 2z_6^2)}{16} : \frac{\sigma_5^2}{1024}\right). \]
Moreover if $C$ is defined over $k$, then so is $(z_6 : s_2 : \sigma_5)$
and the quantity
\[ \Delta' = 64 z_{6}^{6} s_{2}^{2} + 96 z_{6}^{4} s_{2}^{3} + 48 z_{6}^{2} s_{2}^{4} - 256 z_{6}^{5} \sigma_{5} + 8 s_{2}^{5} - 400 z_{6}^{3} s_{2} \sigma_{5} - 1000 z_{6} s_{2}^{2} \sigma_{5} + 3125 \sigma_{5}^{2} \]
must be a square in $k$.

One can translate Wilson's coordinates to the Elkies--Kumar coordinates via
\[ (g, h) = \left(- \frac{2z_6^2 + s_2}{12z_6^2} , \frac{\sigma_5}{64 z_6^5} \right). \]
We remark that under this change of coordinates, $\Delta' = 2^{10} z^2$, so the condition that $\Delta'$ is a square in $k$
is automatically satisfied when $(z,g,h)$ is a $k$-rational point on $Y$.

If $z_6 \ne 0$, we can assume $z_6 = 1$ and this relation gives a one-to-one correspondence
between $(g,h) \in \C^2$ and $(s_2, \sigma_5) \in \C^2$.  If
$z_6 = 0$, then the Igusa--Clebsch invariants of the point $(z_6 : s_2 : \sigma_5)$
must either be $(0 : 0 : 0 : 1)$ if $s_2 = 0$ or
\[ (I_2 : I_4: I_6: I_{10}) = \left(-8 : 1: -3: \frac{\sigma_5^2}{s_2^5}\right) \]
otherwise.  Hence any genus 2 curve with RM-5 either corresponds to a point
$(g,h) \in \C^2$ or has Igusa--Clebsch invariants of the form
 $(0 : 0 : 0 : 1)$ or $(8 : 1 : 3 : s)$ for $s \ne 0$.  
When $z_6 = 0$, $\Delta' = 8 s_{2}^{5} + 3125 \sigma_{5}^{2}$.
Thus $\Delta'$ being a square in $k$ means either $\sqrt 5 \in k$ if $s_2 =0$
or $3125s^2-8s$ is a square, where $s =  -\frac{\sigma_5^2}{s_2^5}$, 
if $s_2 \ne 0$.   It is easy to see that any two
 of these possibilities are mutually exclusive.
 
 Let us now consider the possibility that $(g,h)$ and $(g',h')$ give the
 same Igusa--Clebsch invariants, i.e., there exists $\lambda \in \C^\times$ such that
\[ \left(24 g' + 6 : 9 g'^{2} : 81 g'^{3} + 18 g'^{2} + 36 h' : 4 h'^{2}\right) = \lambda \cdot
 \left(24 g + 6 : 9 g^{2} : 81 g^{3} + 18 g^{2} + 36 h : 4 h^{2}\right) \]
 Since we are interested in genus 2 curves, assume $h$ and $h'$ are both nonzero.
 
 First note if $g=0$, then $g'=0$ and we have $h' = \lambda ^3 h$ and $h'^2 = \lambda^5 h^2$.  Comparing these shows $\lambda = 1$.  So assume $g, g'$ are
 both nonzero.  Then comparing $I_4$'s yields $\lambda = \eps \frac{g'}g$, where
 $\eps = \pm 1$.  Now comparing $I_2$'s shows $4g' + 1 = \eps(4g' +  \frac{g'}g)$.
If $\eps = 1$, then $g = g'$, i.e., $\lambda = 1$ which implies $h = h'$.
Thus assume $\eps = -1$.  Then $g' = - \frac{g}{8g+1}$ and $\lambda = \frac 1{8g+1}$.
Examining the $I_6$'s and $I_{10}$'s then gives
$h' = \frac{g^3+2h}{2(8g+1)^3}$ and 
\[ (h')^2 = \frac{(g^3+2h)^2}{(8g+1)^6} = \frac{4h^2}{(8g+1)^5}. \]
Using the assumption that $g \ne 0$, the latter equality holds if and only if
$32h^2 - 4g^2 h - g^5 = 0$, i.e., $h = \frac{g^2}{16}(1 + u)$ where $u^2 = 1+8g \ne 0$.
Note that if $g' =g$ then $g = - \frac 14$, $\lambda = -1$
so $h'^2 = -h^2$.

Hence for any $(g,h) = \left(g, \frac{g^2}{16}(1 \pm \sqrt{8g+1})\right)$ with 
$g \ne 0, -\frac 18$,
the pair $(g',h') = \left(-\frac g{8g+1},  \frac{g^3 +2h}{(8g+1)^3}\right)$ are
distinct coordinates with the same Igusa--Clebsch invariants,
and these are the only pairs of distinct $(g,h)$-coordinates with
this property.

Now suppose $(g,h)$ and $(g',h')$ are distinct $k$-rational pairs giving the same
Igusa--Clebsch invariants as above, with $u^2 = 8g+1$.   Expressing $g, g', h, h'$ in terms of $u$,
we see that, for both $(g,h)$ and $(g',h')$, the right hand side of \eqref{eq:EK}
is in the $k^\times$-square class of $-(43u^2 + 22u + 43)$.

The above discussion yields the following.

\begin{prop} \label{prop:necess-cond}
Let $C$ be a genus $2$ curve with RM-5 defined over $k$.  
Then the Igusa--Clebsch invariants of $C$ 
must be of one of the following types:

\begin{enumerate}
\item $(I_2 : I_4: I_6: I_{10}) = (0 : 0: 0: 1)$ when $\sqrt 5 \in k$;

\item $(I_2 : I_4: I_6: I_{10}) = (8 : 1 : 3 : s)$ for some nonzero $s \in k$ such that $3125s^2-8s$ is a square; or

\item $(I_2 : I_4: I_6: I_{10}) = \left(24 g + 6 : 9 g^{2} : 81 g^{3} + 18 g^{2} + 36 h : 4 h^{2}\right)$ for a $k$-rational solution $(z,g,h)$ to \eqref{eq:EK} with 
$h \ne 0$.
\end{enumerate}

The above three cases are mutually exclusive.  In case (2), $s$ is
unique.  In case (3), the pair $(g,h)$ is unique except in the case that 
$(g, h) = \left(\frac 18(u^2-1), \frac 1{1024}(u-1)^2(u+1)^3\right)$ for some 
$u \in k^\times \setminus \{ \pm 1 \}$ such that $-(43u^2 + 22u + 43)$
is a square, in which case
$(g, h)$ and $(g', h') = \left(- \frac{g}{8g+1}, \frac{g^3+2h}{2(8g+1)^3}\right)$ are distinct elements of $k^2$
that both correspond to invariants 
\[ \left(48u^2 + 16 : 36(1-u)^2(1+u)^2 : 
72(1-u)^2(1+u)^2(9u^2 + \frac 2 u + 9) : 4(1-u)^4(1+u)^6 \right). \]
\end{prop}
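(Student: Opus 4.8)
The plan is to assemble the statement from Wilson's description of the moduli space, stratified according to the vanishing of the coordinates $z_6$ and $s_2$. The key input is that a genus $2$ curve $C/k$ with RM-5 determines a $k$-rational point $(z_6 : s_2 : \sigma_5) \in \PP^2_{1,2,5}$ with $\sigma_5 \neq 0$, and that---since $C$ is defined over $k$---the quantity $\Delta'$ must be a square in $k$ (the necessity coming from \cref{prop:wilson} and the surrounding discussion). First I would observe that the three cases of the statement correspond exactly to the three strata $\{z_6 = 0,\, s_2 = 0\}$, $\{z_6 = 0,\, s_2 \neq 0\}$, and $\{z_6 \neq 0\}$, which partition the moduli space; it then remains to compute, on each stratum, the shape of the Igusa--Clebsch invariants and to rewrite the condition ``$\Delta'$ is a square'' in the stated form.

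On the stratum $z_6 \neq 0$ I would normalize $z_6 = 1$ and pass to the Elkies--Kumar coordinates via $(g,h) = \bigl(-\frac{2 + s_2}{12},\, \frac{\sigma_5}{64}\bigr)$, putting the invariants in the form $(24g+6 : 9g^2 : 81g^3 + 18g^2 + 36h : 4h^2)$; since $\Delta' = 2^{10} z^2$ in these coordinates, the square condition is automatic as soon as $(z,g,h)$ is a $k$-rational solution of \eqref{eq:EK}, and $h \neq 0$ is exactly the genus $2$ condition $I_{10} \neq 0$. This gives case (3). On the stratum $z_6 = 0$ I would evaluate Wilson's invariant formula directly: when $s_2 = 0$ the invariants degenerate to $(0:0:0:1)$ and $\Delta' = 3125\sigma_5^2 = 5^5 \sigma_5^2$ is a square precisely when $\sqrt 5 \in k$, giving case (1); when $s_2 \neq 0$ the invariants become $(8:1:3:s)$ with $s = -\sigma_5^2/s_2^5$, and multiplying $\Delta'$ by the square $\sigma_5^2/s_2^{10}$ turns it into $3125 s^2 - 8s$, so the square condition becomes exactly that $3125s^2 - 8s$ is a square, giving case (2).

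For mutual exclusivity I would note that the three strata are disjoint in Wilson's space and separately check that the normalized invariant tuples are genuinely distinct: $I_2 = I_4 = 0$ singles out case (1), while comparing the normalized ratios separates the remaining forms, the only possible coincidence between cases (2) and (3) occurring on a thin excluded locus (where the reparametrizing variable $u$ below degenerates), which must be checked by hand. Uniqueness of $s$ in case (2) is immediate, since $s$ is recovered as the normalized $I_{10}$. The substantive uniqueness statement is in case (3): here I would carry out the coincidence analysis for two pairs $(g,h) \neq (g',h')$ with proportional invariants of ratio $\lambda$. Comparing $I_4$ forces $\lambda = \eps\, g'/g$ with $\eps = \pm 1$; comparing $I_2$ rules out $\eps = +1$ (which would force $g = g'$) and, on the branch $\eps = -1$, yields $g' = -g/(8g+1)$ and $\lambda = 1/(8g+1)$. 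Imposing compatibility of $I_6$ and $I_{10}$ then collapses to the single relation $32h^2 - 4g^2 h - g^5 = 0$, equivalently $h = \frac{g^2}{16}\bigl(1 \pm \sqrt{8g+1}\bigr)$. Reparametrizing by $u^2 = 8g+1$ produces the exceptional pair $(g,h)$ and its partner $(g',h')$ together with the common invariants displayed in the statement, and substituting into the right side of \eqref{eq:EK} shows it lies in the $k^\times$-square class of $-(43u^2 + 22u + 43)$; demanding a $k$-rational $z$ is then exactly the condition that $-(43u^2+22u+43)$ be a square.

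The main obstacle will be the case (3) coincidence analysis: one must verify that the $\eps = -1$ branch is the only nontrivial possibility, that the joint $I_6$--$I_{10}$ compatibility reduces precisely to $32h^2 - 4g^2h - g^5 = 0$ with no spurious extra solutions, and that the reparametrization yields exactly the square class of $-(43u^2 + 22u + 43)$ rather than some other quadratic in $u$. Some additional care is needed at the degenerate boundary values ($g = 0$, where the partner is undefined, and $g = -\frac18$, where $8g+1 = 0$), which must be excluded separately, and the completeness of the three-case list ultimately rests on Wilson's theorem that every genus $2$ curve with RM-5 occurs in his moduli space.
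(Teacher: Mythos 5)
Your proposal is correct and takes essentially the same approach as the paper: the paper's own justification (the discussion in Section 2.3 preceding the proposition) likewise stratifies Wilson's moduli space by the vanishing of $z_6$ and $s_2$, converts the squareness of $\Delta'$ into the three stated conditions (using $\Delta' = 2^{10}z^2$ on the $z_6 \neq 0$ stratum and $\Delta' \cdot \sigma_5^2/s_2^{10} = 3125s^2 - 8s$ on the $z_6 = 0$, $s_2 \neq 0$ stratum), and proves uniqueness in case (3) by the identical coincidence analysis --- comparing $I_4$ to get $\lambda = \eps g'/g$, ruling out $\eps = +1$ via $I_2$, reducing the $I_6$--$I_{10}$ compatibility to $32h^2 - 4g^2h - g^5 = 0$, substituting $u^2 = 8g+1$, and identifying the square class $-(43u^2+22u+43)$. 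Your deferral of the mutual-exclusivity check mirrors the paper, which likewise only asserts it is ``easy to see,'' so your argument matches the published proof in both structure and level of detail.
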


We remark that $-(43u^2 + 22u + 43)$ can be a square in
a number field $k$ if and only if every infinite place of $k$ is complex and the completion $k_v$ at every place $v$ above $3$ is an extension of $\Q_3$ of even degree.  In particular, when $k/\Q$ is quadratic this happens
if and only if $k$ is imaginary quadratic and non-split at 3.

\begin{rem} 
If we consider the map $\phi(u) = \left(\frac 18(u^2-1), \frac 1{1024}(u-1)^2(u+1)^3\right)$, then the pairs $(g,h)$ and $(g',h')$
yielding the same Igusa--Clebsch invariants at the end of the proposition
are just the points $\phi(u)$ and $\phi(\tfrac 1u)$, which both lie on the curve
$X_6 : 32h^2-4g^2h - g^5 = 0$ on $Y$.
Noam Elkies explained to us how his work in \cite{elkies} implies
that $X_6$ is the image of the Shimura curve quotient 
$X(6)/\langle w_6\rangle$ parametrizing
principally polarized abelian surfaces with quaternionic multiplication
by the maximal order in the rational quaternion algebra of discriminant 6.
Moreover, the involution on $X_6$ induced from $u \mapsto \tfrac 1u$
corresponds to the involution $w_2 = w_3$ of $X(6)/\langle w_6 \rangle$. 
\end{rem}

%
%

\section{Reduction of quadratic forms over polynomial rings}
\label{sec:reduction}

Here we will explain our approach to reducing quadratic forms over
polynomial rings, which we will then apply to Mestre conics.
Say $R = k[t_1, \dots, t_m]$ is a polynomial ring over a field $k$ 
of characteristic not 2.
Let $Q(x_1, \dots, x_n)$ be a quadratic form over $R$.  Thus
we can write $Q$ as
\[ Q(x_1, \dots, x_n) = \sum_{i, j} f_{i,j}(t_1, \dots, t_m) x_i x_j, \]
where each $A_{i,j} = f_{i,j}(t_1, \dots, t_m) \in R$ and
$A_{j, i} = A_{i, j}$.  Then $A = (A_{i,j}) \in M_n(R)$ is the Gram
matrix for $Q$ with respect to the standard basis $\{e_1, \dots, e_n \}$.
Define the polynomial degree $\deg_k Q$ of $Q$ to be 
$\max_{(i,j)} \deg A_{i,j}$.  

Consider the following two reduction problems:
(i) reduce $Q$ to an equivalent quadratic form $Q'$ over $R$
with minimal polynomial degree; or 
(ii) reduce $Q$ to a quadratic form $Q'$ over $R$ 
which is equivalent over the field of fractions $F$ of $R$
with minimal polynomial degree.
(By equivalence of quadratic forms, we mean isomorphism up to
invertible scaling.)
In case (i), specializations of $Q$ and $Q'$ to
any $t_1, \dots, t_m \in k$ will be $k$-equivalent.
In case (ii), specializations of $Q$ and $Q'$ will merely
be $k$-equivalent for generic choices of $t_1, \dots, t_m \in k$.

It is really reduction problem (ii) that we are interested in,
as it allows for much greater possibilities for reducing our
quadratic forms.  Note that merely diagonalizing $Q$ over
$F$ and clearing denominators to obtain a form over $R$
is not typically helpful in reducing the polynomial degree.
(Conversely, one cannot always diagonalize and maintain
minimal polynomial degree---see \cref{ex:red1}, but fortunately
for our Mestre conic of interest, our reduction process
will also diagonalize the form.) 
We first describe the types of reduction
steps we will use.

\begin{enumerate}
\item Simple degree reduction.  By a $k$-linear change of basis,
we may assume the maximal degree of the $f_{i,j}$'s is attained 
for some of the diagonal terms with $j=i$.  Say $f_{j_0,j_0}$ attains
the maximal degree of the $f_{i,j}$'s.
Write $v = \sum h_i(t_1, \dots, t_m) e_i$ where each $h_i \in R$.
Search for a choice of polynomials $h_i$ such that $\deg Q(v) <
\deg f_{j_0, j_0}$ and $h_{j_0}$ has nonzero constant term.
Now make the change of variable corresponding to changing
basis for the Gram matrix by replacing
$e_{j_0}$ in the standard basis with $v$.  The resulting quadratic 
form will have $Q(v)$ as the coefficient of $x_{j_0}^2$ and so we have
reduced the degree of this diagonal term.

In our Mestre conic case, the degrees of the diagonal terms
turn out to control the polynomial degree of $Q$, so reducing
degrees of diagonal terms is sufficient for us.  In general,
to reduce the degree of the $x_i x_j$ term, one could
similarly search for vectors $v$, $v'$ with polynomial
coefficients such that $\deg B(v, v') < \deg_k Q$,
and then change bases by replacing $e_i$ with $v$ and
$e_j$ with $v'$.

\item Discriminant reduction.  Let $\Delta = \Delta(t_1, \dots, t_m) \in R$
be the discriminant of $Q$.  By changing variables over $F$,
one may be able to remove polynomial factors from $\Delta$.
For instance, $Q_1 : x_1^2 + t_1 x_1 x_2 + t_1^2 x_2^2$ has
$\Delta = -3t_1^2$, and the change of variables 
$x_2 \mapsto \frac 1{t_1} x_2$ gives the quadratic form
$Q_2 : x_1^2 + x_1 x_2 + x_2^2$ with discriminant $-3$.
In general, since an invertible change of variables preserves the 
square class of the discriminant, we might hope to remove square
factors appearing in $\Delta$.  

First divide out any polynomial factors of the gcd of the coefficients
of $Q$.  
Now suppose $g(t_1, \dots, t_m) \in R$ is irreducible over $k$ of
positive degree such that $g^2 \mid \Delta$.  Then we can
attempt the following:

(a) Search for a polynomial vector $v$ such that $g^2 \mid Q(v)$,
with at least one of the coefficients of $v$ having a nonzero 
constant term (e.g., one can take $g(t_1) = t_1$ and $v = e_2$
with the above example of $Q_1$).
Then we can try a change of variables corresponding to
replacing some basis vector $e_i$ with $\frac{v}{g}$
where the $i$-th coefficient of $v$ has nonzero 
constant term.  This change of variables could introduce $g$
in the denominator of some $x_i x_j$ coefficients for $j \ne i$.
However, if we are fortunate, as always happens in our Mestre
conic reduction, then the resulting
quadratic form $Q'$ will still have coefficients in $R$,
and we will have removed a factor of $g^2$ from the discriminant.

(b) Assume $n \ge 3$, and if $n > 3$ that we have the
higher divisibility condition $g^{r} \mid \Delta$ for some
$r > \frac n2$.   Then one can
look for $F$-linearly independent vectors $v_1, \dots, v_{r} \in R^n$  
such that for each $1 \le i \le r$, $g \mid Q(v_i)$
but $g \nmid v_i$ (i.e., $g$ does not divide every 
polynomial coefficient of $v_i$).  Let $j_1, \dots, j_{n-r}$ 
be such that
that $e_{j_1}, \dots, e_{j_{n-r}}, v_1, \dots, v_{r} $ is a basis of $F^n$.  
Then the change of basis $\{ e_1, \dots, e_n \}$
to $\{ g e_{j_1}, \dots, g e_{j_{n-r}}, v_1, \dots, v_{r} \}$ transforms $Q$
to a quadratic form $Q'$ with an extra factor of $g^{2(n-r)}$ in its
discriminant, but now each coefficient of $Q'$ is
divisible by $g$.  Thus the $F$-equivalent form 
$g^{-1}Q'$ has coefficients in $R$, and we will have
removed a factor of $g^{2r-n}$ from $\Delta$.
\end{enumerate}

Simple degree reduction preserves $R$-equivalence,
whereas discriminant reduction only preserves 
$F$-equivalence.  Our strategy is to try simple
degree reduction, then discriminant reduction, and
repeat until the discriminant is squarefree, and then
finish with simple degree reduction.

First we give a baby example of simple degree reduction (1).
Below and in the next section, $e_1, \dots, e_n$ will denote 
the standard basis of the relevant vector space, and $A_i$ will denote
the Gram matrix for $Q_i$ with respect to $\{ e_1, \dots, e_n \}$.

\begin{ex} \label{ex:red1}
Let $R=\Q[t]$, and let $\{ e_1, e_2 \}$ be the standard basis for $M = R^2$.
Let $Q_1 = Q$ be the quadratic form on $M$ given by
\[ Q_1(x,y) = \left(t^{4} + 1\right) x^{2} + \left(2 t^{3} + 2 t\right) x y + \left(t^{2} - 1\right) y^{2}. \]
We can perform simple degree reduction as follows.  
We want to lower the degree of the
$x^2$-coefficient, so let $v = a_1 e_1 + (a_2 + b_2 t)e_2$.  
Then 
\[ Q_1(v) =  (a_1 + b_2)^2 t^4 + 2a_2(a_1+b_2) t^3  + 
(a_2^2 + 2a_1b_2 - b_2^2)t^2 + 2(a_1-b_2)a_2 t + (a_1^2 - a_2^2). \]
Hence setting $b_2 = -a_1$ makes $Q_1(v)$ a degree 2 polynomial
in $t$ with $t^2$-coefficient $(a_2^2-3a_1^2)$, which we cannot make
0 for nontrivial choices of $a_1, a_2 \in \Q$.  However, we can choose
to make either the $t^1$- or $t^0$-coefficient 0 by taking $a_2 = 0$ or
$a_2 = a_1$.  Let us take $v_1 = e_1 - te_2$ so $Q_1(v_1) = 
1-3t^2$, and let
$A_2$ be the Gram matrix for $Q_1$ with respect to $\{ v_1, e_2 \}$.
Let $Q_2$ be the associated quadratic form, i.e., the quadratic form
which has Gram matrix $A_2$ with respect to $\{ e_1, e_2 \}$.
In other words, $Q_2$ is obtained from $Q_1$ by the change of
variables $x \mapsto x$, $y \mapsto -tx + y$.
Then
\[ Q_2(x,y) = \left(1 -3 t^{2}\right) x^{2} + \left(4 t\right) x y + \left(t^{2} - 1\right) y^{2}. \]
Note that $Q_2$ has discriminant $12t^4 + 4$, so
we cannot hope to reduce the degree any further over $R$.

We remark that straightforward diagonalization of $Q_1$ gives
$ \left(t^{4} + 1\right) x^{2} + \frac{(1-3t^4)}{(t^4+1)}y^2$
and for $Q_2$ gives $\left(1 -3 t^{2}\right) x^{2} +\frac{3t^4+1}{3t^2-1} y^2$.
Since the discriminant is irreducible over $\Q$, one cannot
diagonalize over $R$ and have polynomial coefficients of degree $< 4$.
\end{ex}

A slightly more interesting example of (1) is given in the
reduction of the Mestre conic from $Q_1$ to $Q_2$ in 
\cref{subsection:conic-reduction-generic}.
Examples of (2a) are also given by the reductions from 
$Q_2$ to $Q_3$ and $Q_3$ to $Q_4$ in the
same section.   Then the reduction from $Q_5$ to $Q_6$
gives an example of (2b). 

All of these types of reduction involve finding polynomials
$h_i(t_1, \dots, t_m)e_i$ so that the coefficients
of $Q(v)$ satisfy certain conditions (e.g., no coefficients
above a certain degree, or whatever relations 
are imposed upon the coefficients by a divisibility
condition).
In general, this may be computationally challenging, 
as it involves 
finding simultaneous solutions of many quadratic 
equations in many variables to find suitable $h_i$'s.

As we do not have a general algorithm that will provably
minimize the polynomial degree, rather than trying to formulate
a precise reduction algorithm, we will just describe a few techniques
which can be used to lessen the computational difficulties
of these reduction steps in practice.  The first two techniques
apply to both (1) and (2).  The subsequent techniques
are just for discriminant reduction.

\begin{itemize}
\item \emph{Inductively try more complicated polynomial 
combinations of basis vectors.}  We begin by guessing 
certain forms for the polynomial coefficients $h_i$ of
$v$.  Each term of some $h_i$ with an unknown coefficient
adds another variable to solve for in finding a $Q(v)$
satisfying our desired criteria.  E.g., in \cref{ex:red1}
we need to make certain expressions in the
unknown coefficients $a_1, a_2, b_2$ zero to reduce the
degree.  To minimize the number of unknowns, we
begin by guessing as simple forms for the $h_i$'s as
we can hope for, and then try adding more terms as needed.

In \cref{ex:red1}, since we wanted to remove 
$t^4$ from the coefficient of $x^2$, and the coefficient of $y^2$
is degree 2 in $t$, it makes sense to consider constant
multiples $h_1(t)$ of $e_1$ plus linear multiples 
$h_2(t)$ of $e_2$ for $v$.  In fact, we might have first tried 
$h_1(t) = a_1$ and $h_2(t) = b_2 t$, and then if this
were not sufficient to remove the $t^4$ term, then 
we would try including a constant term in $h_2(t)$.  
If this were still unsuccessful, we could try letting $h_1(t)$
be a linear polynomial, which would necessitate $h_2(t)$
having degree 3.  While this is of course not needed such
simple examples as \cref{ex:red1}, it may be necessary
in the presence of additional variables (both more $x_i$'s  
and more $t_j$'s).

\item \emph{Look for coefficient conditions that factor.}
Say for instance that $m=2$, and we guess linear
forms $h_i(t_1,t_2) = a_i + b_i t_1 + c_i t_2$ for each $h_i$.
Then our desired conditions on $Q(v)$ may be something
like $\deg Q(v) < 4$ or $(t_1t_2 + 1)^2 \mid Q(v)$.
In the former case, say,  we want to make each 
$t_1^j t_2^{4-j}$ term of $Q(v)$ vanish.  That gives 5
quadratic equations in $3n$ unknowns.  How can we solve
this?

If our quadratic form is meant to reduce, we might hope
it does for algebraically simple reasons.  If we are fortunate,
then some of these quadratic equations we need to solve
may factor, as in the case of the $t^4$-coefficient of $Q_1(v)$
in \cref{ex:red1}.  If we are even more fortunate, this
forces one of our unknowns to be a certain linear combination
of other unknowns, and we can reduce the number of unknowns
and repeat.  We are fortunate in this way in the case of the
Mestre conic we reduce in 
\cref{subsection:conic-reduction-generic}.

\item \emph{Order of discriminant factor removal.}
In removing discriminant factors $g^r$, it may be easier to
remove certain factors before others.  On one hand, 
it may help to try to start with factors $g^2$ where $g$ is of small
degree, or $g$ only involves a small number of the variables 
$t_1, \dots, t_m$,
to more easily find $h_i$ such that $g^2 | Q(v)$ or $g | Q(v)$.
For instance, if $m=2$, $g(t_1, t_2) = t_1$ and we want
$g^2 | Q(v)$, then any $t_1^i t_2^j$ term in $Q(v)$ with 
$i \le 1$ must vanish.  However, the main issue we encountered
in reducing our Mestre conic was that, at a given stage,
attempting to remove one factor may lead to 
quadratic coefficient equations which factor, but attempting to
remove other factors does not. 

Thus for (2) we propose a process roughly of the following form.  
Try the simplest possible choices for $h_i$'s for removing 
different factors $g^r$ of the discriminant.  
Then pursue the ones that lead to linear relations among the
unknowns, inductively
adding more terms, and repeat until a factor is removed or
a bound for the complexity of the $h_i$'s is reached.
This approach is what led us the (otherwise unexplained) 
order of removing discriminant factors we use in 
\cref{subsection:conic-reduction-generic}.

\item \emph{Change variables to remove constant terms.}
If we want to remove a factor of say $(t_1-3)^2$ from
the discriminant, writing down the divisibility conditions
is a bit easier in practice if we first change
the polynomial variables $t_1 \mapsto t_1 + 3$, so
one is asking about removing a factor of $t_1^2$ from
a transformed form $Q'$.  For an example,
see the reduction of $Q_6$ in 
\cref{subsection:conic-reduction-generic}.

\item \emph{Examine minors.}
If some factor $g^r$ divides the discriminant of $Q$,
depending on $n$ and $r$, it may not be clear whether
we should try (2a) or (2b).  In this case, one can examine
the (determinant) minors of the Gram matrix.  
If some power of $g$ divides
sufficiently many minors, this suggest that (2b) may be possible.

Furthermore, if many of the diagonal minors are divisible
by $g$ then we can try looking for vectors $v_i$ as in (2b)
whose projection to $e_j$ is 0, for each $j$ in a set corresponding
to the minors.  E.g., if $r=n-1$ and each diagonal minor is divisible
is $g$, then we can look for vectors $v_1, \dots, v_{n-1}$ such that
the projection of $v_i$ to $e_i$ is 0 for each $i$.  This helps
reduce the number of unknowns we need to use, and is used
in the reduction of $Q'_6$ in 
\cref{subsection:conic-reduction-generic}.
\end{itemize}

%
%

\section{Reducing the Mestre conic} \label{section:conic-reduction}

\subsection{Mestre's construction of genus 2 curves} \label{section:mestre-conic-background}

Suppose $k \subseteq \C$, and $(I_2, I_4, I_6, I_{10}) \in k^4$ are 
Igusa--Clebsch invariants for a genus 2 curve $\mathcal C/\C$ without
extra automorphisms, i.e., $\Aut_\C(\mathcal C) \simeq C_2$.  
(In this section only, we use $\mathcal C$ rather than $C$ to denote
a genus 2 curve to avoid conflict with the notation for Clebsch
invariants.)
In \cite{mestre}, Mestre gave a method to determine whether $\mathcal C$
is defined over $k$, and if so, find a model.  Mestre worked in terms
of Clebsch invariants $(A,B,C,D)$ rather than Igusa--Clebsch invariants.
One can translate between these two sets of invariants via
\begin{align*}
&I_2 = -120  A, \quad I_4 = 90  (-8 A^{2} + 75 B), \quad
I_6 = 540  (16 A^{3} - 200 A B + 375 C) \\
&I_{10} = -162  (384 A^{5} - 6000 A^{3} B + 18750 A B^{2} - 10000 A^{2} C + 37500 B C + 28125 D).
\end{align*}

Mestre defines two elements $L$ and $M$ of $\Q(A,B,C,D)[x_1,x_2,x_3]$ as
$$L = \sum_{\substack{1 \leq i,j \leq 3}} L_{ij}x_ix_j\quad\text{and}\quad M = \sum_{\substack{1 \leq i,j,k \leq 3}} M_{ijk}x_ix_jx_k,$$
with
\begin{align*}
  &\quad L_{11} = 2C + \tfrac{1}{3}AB &\quad L_{22} &= D&\\
  &\quad L_{12} = \tfrac{2}{3}(B^2 + AC) &\quad L_{23} &= \tfrac{1}{3}B(B^2 + AC) + \tfrac{1}{3}C\left(2C + \tfrac{1}{3}AB\right)&\\
  &\quad L_{13} = D
  &\quad L_{33} &= \tfrac{1}{2}BD + \tfrac{2}{9}C(B^2 + AC),&
\end{align*}
\begin{flalign*}
  &\quad M_{111} = \tfrac{2}{9}\left(A^2C - 6BC + 9D\right)&\\
  &\quad M_{112} = \tfrac{1}{9}\left(2B^3 + 4ABC + 12C^2 + 3AD\right)&\\
  &\quad M_{113} = \tfrac{1}{9}\left(AB^3 + \tfrac{4}{3}A^2BC + 4B^2C + 6AC^2 + 3BD\right)&\\
  &\quad M_{122} = \tfrac{1}{9}\left(AB^3 + \tfrac{4}{3}A^2BC + 4B^2C + 6AC^2 + 3BD\right)&\\
  &\quad M_{123} = \tfrac{1}{18}\left(2B^4 + 4AB^2C + \tfrac{4}{3}A^2C^2 + 4BC^2 + 3ABD + 12CD\right)&\\
  &\quad M_{133} = \tfrac{1}{18}\left(AB^4 + \tfrac{4}{3}A^2B^2C + \tfrac{16}{3}B^3C + \tfrac{26}{3}ABC^2 + 8C^3 + 3B^2D + 2ACD\right)&\\
  &\quad M_{222} = \tfrac{1}{9}\left(3B^4 + 6AB^2C + \tfrac{8}{3}A^2C^2 + 2BC^2 - 3CD\right)&\\
  &\quad M_{223} = \tfrac{1}{18}\left(-\tfrac{2}{3}B^3C - \tfrac{4}{3}ABC^2 - 4C^3 + 9B^2D + 8ACD\right)&\\
  &\quad M_{233} = \tfrac{1}{18}\left(B^5 + 2AB^3C + \tfrac{8}{9}A^2BC^2 + \tfrac{2}{3}B^2C^2 - BCD + 9D^2\right)&\\
  &\quad M_{333} = \tfrac{1}{36}\left(-2B^4C - 4AB^2C^2 - \tfrac{16}{9}A^2C^3 - \tfrac{4}{3}BC^3 + 9B^3D + 12ABCD + 20C^2D\right),&
\end{flalign*}
and
\begin{align*}
  &L_{ij} = L_{ji}, \, M_{ijk} = M_{jik} = M_{ikj}.
\end{align*}
The \textit{Mestre conic} and the \textit{Mestre cubic} associated to
$\mathcal C$ (or equivalently, the Clebsch or Igusa--Clebsch invarants) 
are defined to be the projective varieties $L = 0$ and $M = 0$ over $\Q(A,B,C,D)$. In a slight abuse of terminology, we will occasionally say that $L$ itself is the Mestre conic, and similarly for $M$.

\begin{thm}[\cite{mestre}] \label{thm:mestre-conic}
  Suppose $(A,B,C,D) \in k^4$ are the Clebsch invariants of a
  genus 2 curve $\mathcal C/\C$ without extra automorphisms. 
  Then $\mathcal C$ is defined over $k$ if and only if the associated 
  the Mestre conic $L = 0$ in $\mathbb P^2(k)$ has
  a $k$-rational point.
\end{thm}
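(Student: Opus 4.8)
The plan is to recover the six Weierstrass points of $\mathcal C$ directly from the invariants, by interpreting $L$ and $M$ as covariants of the binary sextic and then descending through a rational parametrization of $L$. Write a Weierstrass model as $y^2 = f(u,v)$ with $f$ a binary sextic, so that $\mathcal C$ is determined up to $\C$-isomorphism by its unordered branch locus, i.e.\ by $f$ modulo the $\GL_2$-action on $(u,v)$. The classical fact underlying Mestre's construction is that, for a sextic without extra automorphisms, $f$ admits three linearly independent quadratic covariants $\xi_1,\xi_2,\xi_3$, whose coefficients are polynomials in the Clebsch invariants $(A,B,C,D)$ (this is the source of the explicit $L_{ij}$ listed above). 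Since $\mathrm{Sym}^2$ of a two-dimensional space is three-dimensional, the $\xi_i$ have no common zero and the map $\nu\colon(u:v)\mapsto(\xi_1:\xi_2:\xi_3)$ is a $2$-uple Veronese embedding $\PP^1\hookrightarrow\PP^2$ whose image is exactly the smooth conic $L=0$; the equation $L$ is the unique quadratic relation among the $\xi_i$, so its coefficients are polynomials in $A,B,C,D$. Similarly $M$ is a suitable order-$6$ covariant rewritten in the coordinates $x_1,x_2,x_3$, and the geometric content is that the six branch points, transported to $L$ by $\nu$, are cut out on $L$ by $M=0$, so that $L\cap\{M=0\}$ recovers the branch locus up to $\GL_2$. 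First I would record this dictionary and observe that, because the invariants lie in $k$, both $L$ and $M$ are defined over $k$.

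Given that dictionary, the easy direction is immediate. If $\mathcal C$ has a model $y^2=f(u,v)$ over $k$, then the covariants $\xi_i$ computed from $f$ lie in $k[u,v]$ and have no common zero, so $\nu$ is a base-point-free morphism over $k$ and $\nu\big((1:0)\big)$ is a $k$-rational point of $L$. For the substantive converse, suppose $L$ has a $k$-rational point. Then $L$ is a smooth conic over $k$ with a rational point, hence $L\cong\PP^1_k$, and I would fix a $k$-rational isomorphism $\psi\colon\PP^1_k\to L$. Pulling the cubic $M$ back along $\psi$ yields a binary sextic $g(s,t)=(\psi^*M)(s,t)$ with coefficients in $k$, and I claim that $y^2=g(s,t)$ is a model of $\mathcal C$ over $k$.

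The main work, and the step I expect to be the chief obstacle, is verifying this last claim: that $g$ is $\GL_2(\bar k)$-equivalent to $f$, so that $y^2=g$ is $\C$-isomorphic to $\mathcal C$. This is where the covariant theory and the no-extra-automorphisms hypothesis are essential. On one hand, since $\psi$ is an isomorphism onto $L$ and $M$ cuts out the branch locus on $L$, the six points of $\{g=0\}$ are precisely the branch points of $\mathcal C$ transported through $\psi$, up to $\PGL_2$; equivalently $g$ and $f$ have proportional Clebsch invariants. On the other hand, because $\Aut_\C(\mathcal C)\simeq C_2$, the Clebsch invariants determine the class of $\mathcal C$ in the coarse moduli space $\calM_2$, so matching invariants forces $y^2=g$ and $\mathcal C$ to be $\C$-isomorphic, whence $\mathcal C$ descends to $k$. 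I would also need the nondegeneracy inputs: that $L$ is a smooth conic, which is exactly the condition that the three quadratic covariants be independent and is guaranteed by $\Aut_\C(\mathcal C)\simeq C_2$; and that $\disc(g)=I_{10}(g)\neq0$, so that $g$ has no repeated root and defines a genuine genus $2$ curve. The delicate point throughout is the bookkeeping of $\SL_2$- versus $\GL_2$-equivariance and of the weights, since the covariants transform by powers of the determinant, and one must confirm that the recovery $(L,M)\mapsto g$ is genuinely inverse to $f\mapsto(L,M)$ modulo these scalars rather than merely up to the group action.
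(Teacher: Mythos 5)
The paper does not actually prove \cref{thm:mestre-conic}: it is quoted from \cite{mestre} and used as a black box, so there is no internal proof to compare against. What you have written is, in substance, a reconstruction of Mestre's own argument, and its architecture is correct: the three order-two covariants give a Veronese map $\nu\colon \PP^1 \to \PP^2$ whose image is the conic $L=0$, whose defining quadratic relation has invariant coefficients; the cubic $M$ cuts out on $L$ the images of the six branch points; the forward direction follows by evaluating $\nu$ (defined over $k$ for a $k$-model) at a $k$-point of $\PP^1$; and the converse follows by parametrizing $L$ over $k$ via a rational point and pulling back $M$, since $\psi^{-1}\circ\nu \in \PGL_2(\bar k)$ shows the resulting sextic $g$ has branch locus $\PGL_2(\bar k)$-equivalent to that of $f$, hence $y^2=g$ is a $k$-model of $\mathcal C$ (quadratic-twist ambiguity is irrelevant over $\C$).

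Two caveats. First, everything you package as the ``dictionary'' is precisely the computational core of Mestre's paper: that the displayed $L_{ij}$ and $M_{ijk}$ really are the coefficients of the quadratic relation among the covariants and of the form cutting out the branch locus on $L$, and that smoothness of $L$ is equivalent to triviality of the reduced automorphism group. Your write-up assumes these classical identities rather than verifying them, so it is a correct deduction of the theorem \emph{from} Mestre's identities, not an independent proof of them. Second, a small but real slip: the coefficients of the covariants $\xi_i$ are polynomials in the coefficients of $f$, not in the invariants $(A,B,C,D)$ --- a nonconstant covariant cannot have invariant coefficients; it is the coefficients of the relation $L$ (and of $M$) that are invariants, as your parenthetical then correctly says. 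Relatedly, the weight bookkeeping you defer does need one observation: the given tuple $(A,B,C,D)\in k^4$ and the invariants of a chosen $k$-model agree only up to weighted scaling by some $t$, and one checks (using that the invariants come from a genuine genus $2$ curve without extra automorphisms) that $t\in k^\times$, so the two conics are $k$-equivalent by a diagonal substitution and the rational-point condition is well posed; this is harmless but should be said.
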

If the Mestre conic associated to $\mathcal C/\C$ has $k$-rational points then those rational points are parameterized by a single projective parameter which we will call $x$. We will write $x_i = x_i(x)$ with $i = 1,2,3$ to denote this parametrization.
\begin{thm}[\cite{mestre}] \label{thm:mestre-cubic}
 Suppose $(A,B,C,D) \in k^4$ are the Clebsch invariants of a
  genus 2 curve $\mathcal C/\C$ without extra automorphisms
  and the associated Mestre conic $L = 0$ has a $k$-rational point. 
Then a model for $\mathcal C$ over $k$ is given by
  $$y^2 = M(x_1(x), x_2(x), x_3(x)),$$
  where $M=0$ is the associated Mestre cubic.
\end{thm}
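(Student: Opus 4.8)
The plan is to give Mestre's construction a geometric meaning and then descend from $\C$ to $k$. Write $y^2 = f(t)$ for a model of $\mathcal C$ over $\C$, where $f$ is a binary sextic. To $f$ one attaches three quadratic covariants $\xi_1,\xi_2,\xi_3$, that is, binary quadratic forms in the hyperelliptic variable $t$ whose coefficients are semi-invariants of $f$ and which span the space of all binary quadratics for generic $f$. Forming the linear combination $\xi = x_1\xi_1+x_2\xi_2+x_3\xi_3$, its discriminant $\disc_t(\xi)$ is a ternary quadratic form in $(x_1:x_2:x_3)$, and I would identify this with the Mestre conic $L$, while an appropriate cubic expression built from the $\xi_i$ is identified with $M$. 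The first step is thus bookkeeping: verify that the $L_{ij}$ and $M_{ijk}$ produced this way match the tabulated polynomials in the Clebsch invariants $(A,B,C,D)$. This is the classical Clebsch computation, which I would carry out (or cite) using standard transvectant identities for binary sextics.

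The geometric content, which is the crux, is an identity valid over $\C$ for generic invariants. A point of the conic $\{L=0\}$ records a value $(x_1:x_2:x_3)$ for which $\xi$ is a perfect square $(\alpha t + \beta)^2$, hence a double root $[\alpha:\beta]\in\PP^1$; this sets up an isomorphism $\phi : \{L=0\} \xrightarrow{\sim} \PP^1$. By B\'ezout the conic and cubic meet in $6$ points, and the key claim is that $\phi$ carries the six points of $\{L=0\}\cap\{M=0\}$ to the six roots of $f$, i.e.\ to the branch locus of $\mathcal C$; equivalently, the restriction of $M$ to $\{L=0\}$, read off in the coordinate $t$, is proportional to $f$. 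Granting this, parametrizing $\{L=0\}$ by a projective coordinate $x$ as $x_i=x_i(x)$ (quadratics in $x$) and substituting into $M$ yields a binary sextic $M(x_1(x),x_2(x),x_3(x))$ whose roots are, up to the induced change of variable, the branch points of $f$; hence $y^2=M(x_1(x),x_2(x),x_3(x))$ is $\C$-isomorphic to $\mathcal C$. Proving this branch-locus identity is the main obstacle: it is the substantive part of Mestre's theorem, amounting to an explicit covariant computation together with a genericity argument to exclude degeneration.

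For the descent I would use equivariance. Replacing $f$ by $f\circ g$ with $g\in\GL_2$ induces a linear substitution of the auxiliary variables $(x_1,x_2,x_3)$, coming from the action of $g$ on binary quadratic forms in $t$, under which $L$ and $M$ are carried to scalar multiples of the corresponding forms. Thus the zero loci $\{L=0\}$, $\{M=0\}$, and the isomorphism class of $y^2=M|_{\{L=0\}}$, depend only on the $\GL_2$-orbit of $f$, i.e.\ only on $(A,B,C,D)$. This explains why the $L_{ij},M_{ijk}$ are polynomials in the invariants, and why the output is well defined up to $k$-isomorphism: two $k$-parametrizations of the conic differ by $\PGL_2(k)$, producing $\GL_2(k)$-equivalent sextics.

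Finally, descent itself. Since $A,B,C,D\in k$, the conic $L$ and cubic $M$ are defined over $k$. If $L=0$ has a $k$-rational point then the smooth conic $\{L=0\}$ is $k$-isomorphic to $\PP^1_k$, so we may choose the parametrization $x\mapsto(x_1(x):x_2(x):x_3(x))$ with $x_i\in k[x]$. Then $M(x_1(x),x_2(x),x_3(x))\in k[x]$, and $y^2=M(x_1(x),x_2(x),x_3(x))$ is a genus $2$ curve over $k$ that, by the identity of the second paragraph, is $\C$-isomorphic to $\mathcal C$. Because $\mathcal C$ has no extra automorphisms, $\Aut_\C(\mathcal C)\simeq C_2$, its $\C$-isomorphism class is pinned down by its Clebsch invariants via the coarse moduli property, so this curve is genuinely a $k$-model of $\mathcal C$, as claimed. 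I expect the only real difficulty beyond the central covariant identity to be bookkeeping with the genericity hypotheses—ensuring the conic is smooth and the six branch points distinct—which the hypothesis $\Aut_\C(\mathcal C)\simeq C_2$ is precisely designed to guarantee.
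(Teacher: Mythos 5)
The paper does not contain a proof of this statement: it is quoted directly from Mestre \cite{mestre}, so the only benchmark is Mestre's original argument. Your outline is, in structure, exactly that argument: the conic and cubic arise from the quadratic covariants of the sextic $f$, a $k$-rational point lets one parametrize the conic over $k$, and substituting the parametrization into the cubic recovers a sextic Weierstrass model of $\mathcal C$ over $k$. Your equivariance observation (that a $\GL_2$ change of variable in $f$ induces a linear substitution in $(x_1,x_2,x_3)$, so everything depends only on the invariants) and the final descent step (a smooth conic with a $k$-point is $k$-isomorphic to $\PP^1$, and different $k$-parametrizations give $\GL_2(k)$-equivalent sextics) are correct and are exactly how Mestre concludes.

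The problem is that what you candidly call ``the main obstacle'' is the entire content of the theorem, and you do not prove it. The identity that the restriction of $M$ to $\{L=0\}$, read off in the parametrizing coordinate, is proportional to $f$ --- equivalently, that the six points of $\{L=0\} \cap \{M=0\}$ map to the branch locus of $\mathcal C$ --- is precisely Clebsch's covariant identity that Mestre establishes by transvectant computations, and your argument asserts it rather than derives it. The same applies to the bookkeeping step identifying $\disc_t(\xi)$ and your cubic expression with the tabulated $L_{ij}$ and $M_{ijk}$ as polynomials in $(A,B,C,D)$ (note also that your dual description of the conic, as the locus where $\xi$ acquires a double root, must be reconciled with the image-of-the-covariant-map description before those coefficients can be matched). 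As written, you have shown only that \emph{if} the covariant identities hold, then a $k$-point on the conic yields a $k$-model; so this is an accurate and well-organized roadmap of Mestre's proof, with the descent portion complete, but not a self-contained proof. One further point worth making explicit: the substitution produces $c\cdot f$ up to a $\GL_2$ change of variable for some scalar $c \in k^\times$, so the curve you construct is a priori a quadratic twist of the given model; this is harmless because the theorem claims only a model in the $\C$-isomorphism class of $\mathcal C$, but the scalar should be acknowledged.
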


Finally, we elaborate on the condition that $\mathcal C/\C$ has
no extra automorphisms.  The possibilities for
extra automorphisms of genus 2 curves were determined by Bolza.
The reduced automorphism group of $\Aut^\red_\C(\mathcal C)$
is $\Aut_\C(\mathcal C)$ modulo the hyperelliptic involution.
If $\mathcal C$ has extra automorphisms, then 
$\Aut^\red_\C(\mathcal C)$ either contains an involution or has order 5.
The latter case happens exactly when
the (Clebsch or Igusa--Clebsch invariants) of $\mathcal C$
are $(0 : 0 : 0 : 1) \in \mathbb P^3_{1,2,3,5}$.

As explained in \cite{mestre},
the Mestre conic attached to a genus 2 curve $\mathcal C/\C$
is singular if and only if the reduced automorphism group of
$\mathcal C$ contains an involution.  Thus the condition that
$\mathcal C/\C$ has no extra automorphism can be restated
as: the Mestre conic $L=0$ is nonsingular and $I_2$, $I_4$ and $I_6$
are not all 0.

\subsection{The general case} \label{subsection:conic-reduction-generic}
Here we study the Mestre conic $L$ associated to a point $(z,g,h)$
of $Y$, i.e., to Igusa--Clebsch invariants 
$\left(24 g + 6 : 9 g^{2} : 81 g^{3} + 18 g^{2} + 36 h : 4 h^{2}\right)$.
After scaling by $2^4 \cdot 3^7 \cdot 5^{14}$, the Mestre conic $L : \sum_{i, j = 1}^3 L_{ij} x_i x_j = 0$ defined above has coefficients
\begin{align*}
L_{11} &= 189843750 (-96 g^{3} - 337 g^{2} - 108 g + 400 h - 9) \\
L_{12} &=  -2531250 (-144 g^{4} - 1299 g^{3} - 754 g^{2} + 2000 g h - 144 g + 500 h - 9) \\
L_{13} &= L_{22} =  -3750 (1944 g^{5} + 40905 g^{4} + 36990 g^{3} - 68400 g^{2} h + 11835 g^{2} - 43200 g h  \\ 
& \qquad \qquad  + 50000 h^{2}  + 1620 g - 5400 h + 81) \\
L_{23} &=  450 (324 g^{6} + 14931 g^{5} + 19395 g^{4} - 25800 g^{3} h + 9105 g^{3} - 30100 g^{2} h + 2020 g^{2} \\
& \qquad \qquad  - 8400 g h + 10000 h^{2} + 216 g - 700 h + 9) \\
L_{33} &= - (2916 g^{7} + 283338 g^{6} + 499041 g^{5} - 496800 g^{4} h + 319140 g^{4} - 915300 g^{3} h \\
& \qquad \qquad + 525000 g^{2} h^{2} + 101160 g^{3} - 426300 g^{2} h + 500000 g h^{2} + 17214 g^{2} - 76800 g h \\
& \qquad \qquad + 100000 h^{2} + 1512 g - 4800 h + 54) 
\end{align*}
The discriminant of $L$, by which we mean the determinant of the Gram
matrix, is then 
\[ \disc(L) = 2^7 \cdot 3^3 \cdot 5^{22} \cdot  h^{2} (8 h-9 g^{2})^{2} z^2. \]

Set $Q_1 = L$ and let $A_1$ be the Gram matrix of $Q_1$
with respect to the standard basis $\{e_1, e_2, e_3 \}$.
We will now perform a series of reductions on the Mestre conic
using the techniques described in the previous section.

Note that the $x_1^2$, $x_2^2$ and $x_3^2$ coefficients of
$L = Q_1$ are respectively degree 3, 5 and 7 polynomials in
$g$ (and degrees 1, 2 and 2 in $h$).
First we want to try to reduce the degree in $g$ of the $x_3^3$
coefficient.   Consider 
$v_1 = a_1 g^2 e_1 + a_2 g e_2 + e_3$, where $a_1$, $a_2$ denote
rational variables. The $Q_1(v_1)$ is degree 7 in $g$, and
the $g^7$-coefficient is $-2916(2500a_1 - 50a_2 + 1)^2$.  So set 
$a_2 = 50a_1 + \frac1{50}$.  This makes the $g^6$-coefficient of
$Q_1(v_1)$ equal
$-\frac{3^5 5^4}2(1250a_1 - 1)^2$.  Taking $a_1 = \frac 1{1250}$ gives
\begin{multline*} Q_1(v_1) = -2916 g^{5} - 24354 g^{4} + 10800 g^{3} h - 1500000 g^{2} h^{2} - 21483 g^{3} 
+ 78000 g^{2} h  \\
+  40000 g h^{2} - \frac{14259}{2} g^{2} + 39000 g h - 100000 h^{2} - 1026 g + 4800 h - 54, \end{multline*}
where
$v_1 = \frac 1{1250}g^2 e_1 + \frac 3{50}ge_2 + e_3$.
Thus we now consider the Gram matrix $A_2$ for $Q_1$
with respect to the basis $\{ e_1, e_2, v_1 \}$.  Let $Q_2$ be 
the resulting quadratic form from this change of variables, i.e., 
$Q_2(v) = {}^t v A_2 v$.  In particular, the $x_3^2$-coefficient
of $Q_2$ is $Q_1(v_1)$.

The $x_1^2$, $x_2^2$ and $x_3^2$ coefficients of $Q_2$ are 
degrees 3, 5 and 5 in $g$ (and no other coefficient has higher
degree).  We may try to reduce the coefficient degrees for
$x_2^2$ and $x_3^2$ by replacing $e_2$ and $e_3$
with vectors of the form $a_1 g e_1 + e_2$ and
$b_1 g e_1 + e_3$.  In this way, one to reduce $Q_2$
to a quadratic form whose coefficients are elements of
$\Q[g,h]$ of degree $\le 4$, but there are no obvious
ways to further reduce the degree from there, and this
reduction does not make the next step
any easier, so we will not do this.

Instead, we will next remove a polynomial factor from
the discriminant of $Q_2$, which is  a rational multiple 
of $h^2(8h-9g^2)^2 z^2$.  The $h^2$ factor has the
lowest degree, so we will begin with that.  We will find a vector
$v_2 = (a_1 + b_1 g)e_1 + a_2 e_2 + a_3 e_3$ such that
$Q_2(v_2)$ is divisible by $h^2$.  This is essentially the simplest
polynomial combination of standard basis vectors where we
can hope to kill off all of the $g^j$ terms in $Q_2(v_2)$, and it
turns out to be sufficient. 

The constant term of $Q_2(v_2)$ is 
$-54 \, {\left(5625 \, a_{1} - 75 \, a_{2} + a_{3}\right)}^{2}$, so we
set $a_3 = 75 a_2 - 5625a_1$.  Now we kill off the highest 
degree $g^j$ terms.
Then the $g^5$-coefficient of
$Q_2(v_2)$ is $-1822500  {\left(225  a_{1} - a_{2} - 100 b_{1}\right)}^{2}$.  Set $a_2 = 225a_1 - 100b_1$.  Then the $g^4$-coefficient 
is $-118652343750 {\left(3 a_{1} - b_{1}\right)}^{2}$.  Setting
$b_1 = 3a_1$ yields $Q_2(v_2)$ is a multiple of $h^2$.  Specifically,
take $a_1 = 2^{-2} \cdot 3^{-2} \cdot 5^{-6}$, and
then $Q_2(v_2) = -2(300 g^{2} + 2 g + 3)h^{2}$,
where $v_2 = \frac 1{562500}((1+3g)e_1 - 75e_2 - 11250e_3)$.
Let $A_3$ be the Gram matrix of $Q_2$ with respect to the basis
$\{ e_1, e_2, \frac 1h v_2 \}$, and $Q_3$ the associated quadratic
form.  So $Q_3$ is not $\Q[g,h]$-equivalent to $Q_2$,
but after specializing to any $g, h \in \Q$ with $h \ne 0$, the forms
$Q_2$ and $Q_3$ are $\Q$-equivalent.

Now we will remove the $(8h-9g^2)^2$ factor from the determinant.
The degrees in $g$ of the $x_1^2$, $x_2^2$ and $x_3^2$ coefficients
of $Q_3$ are 3, 5 and 3.
Let $v_3 = (a_1 + b_1g)e_1 + a_2 e_2 + (a_3 + b_3g)e_3$.
We want $Q_3(v_3)$ to be a multiple of $(8h-9g^2)^2$.
To kill the constant term of $Q_3(v_3)$, we need to set
$a_3 = 225a_2 - 16875a_1$.  Then to kill the $h$-coefficient,
we need $a_2 = 75a_1$.  Then to kill the $g^2$-coefficient,
$b_3 = 67500a_1 - 16875b_1$.  At this point there are only 
nonzero $g^5$, $g^4$ and $g_2h$ and $h^2$ terms, so for
$Q_3(v_3)$ to be a multiple of $(8h-9g^2)^2$ it needs to be
a rational multiple and the $g^5$ term must vanish.  This is
accomplished with $b_1 = \frac 32 a_1$.
In summary 
$v_3 = a_1 ((1+\frac 32 g)e_1 + 75e_2 + \frac{84375}2g e_3)$.
Taking $a_1 = 2 \cdot 3^{-1} \cdot 5^{-6}$ then gives
$Q_3(v_3) = -30(8h-9g^2)^2$.  Now let $A_4$ be the 
Gram matrix of $-Q_3$ with respect to the basis
$\{ \frac {e_1}{1875}, \frac {v_3}{8h-9g^2}, e_3 \}$,
and $Q_4$ the associated quadratic form,
\begin{multline*}
Q_4: \left(5184 g^{3} + 18198 g^{2} + 5832 g - 21600 h + 486\right) x_{1}^{2} + \left(612 g + 108\right) x_{1} x_{2} + 30 x_{2}^{2} \\+ \left(288 g^{2} + 684 g - 4000 h + 108\right) x_{1} x_{3} + \left(-240 g + 12\right) x_{2} x_{3} + \left(600 g^{2} + 4 g + 6\right) x_{3}^{2}.
\end{multline*}
Specializing $g, h$ to any rationals such that $h \ne 0$
and $8h \ne 9g^2$, $Q_4$ is $\Q$-equivalent to the original
Mestre conic $L$.  The discriminant of $Q_4$ is $-9600z^2$. 

Now there is no obvious way to further reduce the degree, 
and indeed, it seems that there is not much further simplification
that can be done over $\Q(g,h)$.  The reduction we
perform next will not preserve $\Q$-equivalence of quadratic
forms (even assuming $z \ne 0$) if $g, h$ are rational 
but $z$ is not.

Let $Q_5$ be the quadratic form over $\Q[m,n]$ obtained by
converting $Q_4$ from $(g,h)$ to $(m,n)$ via \eqref{eq:gh2mn}.
Let $A_5$ be the Gram matrix of $Q_5$ with respect
to the standard basis.  
The coefficients of $Q_5$ are elements of
$\Q[m,n]$ of degree $\le 6$, and the discriminant is 
\[ -\frac{96}{25} n^{2}  (m^{2} - 5 n^{2})^{2} (m^{2} - 5 n^{2} - 5)^{2}. \]  
Let $v_5 = a_1 e_1 + a_2 e_2 + a_3 e_3$
be a rational linear combination of the standard basis vectors.
Then $Q_5(v_5)$ has constant term
$\frac{3}{250}  {\left(63  a_{1} - 50  a_{2} - 70 a_{3}\right)}^{2}$.
Setting $a_3 = \frac{1}{70}(63  a_{1} - 50  a_{2})$, then gives that
$Q_5(v_5) = p_1(m,n) (m^2-5n^2)$ for some polynomial
$p_1(m,n)$ with constant term 
$5  {\left(441  a_{1} - 50  a_{2}\right)}^{2}$.
Hence we set $a_1 = 50$ and $a_2 = 441$ (which makes $a_3 = -270$) 
to get
\[ Q_5(v_5) = 30  {\left(16  m^{2} - 80  n^{2} + 2729\right)} {\left(m^{2} - 5 n^{2}\right)}^{2}. \]
Let $A_6$ be the Gram matrix of $Q_5$ with respect to the basis
$\{ \frac{v_5}{m^2-5n^2}, e_2, e_3 \}$, and $Q_6$ the associated
quadratic form, which has polynomial degree 4 and discriminant
$-9600 n^{2}  (m^{2} - 5 n^{2} - 5)^{2}$.

Next one might try to remove the $n^2$ factor from the discriminant,
but evaluating $Q_6$ on simple combinations such as 
$v = (a_1 + b_1 m) e_1 + a_2 e_2 + a_3 e_3$ leads to polynomials
without linear factors in $a_1, a_2, a_3, b_1$ for the coefficients
of powers of $m$.  So it is not immediately clear how to find some
$v$ such that $Q_6(v)$ is divisible by $n^2$.  On the other hand,
the diagonal minors of $A_6$ are divisible by $(m^{2} - 5 n^{2} - 5)$
which suggests we can remove a factor of $(m^{2} - 5 n^{2} - 5)$
from the discriminant by working with combinations of just 2 of the
standard basis vectors at a time.

To make it easier to look for multiples of $(m^{2} - 5 n^{2} - 5)$,
we first make the change of variables $m=m+5$, $n = n+2$.
This changes $(m^{2} - 5 n^{2} - 5)$ to $(m^{2} - 5 n^{2} + 10 m - 20 n)$,
which has no constant term.  Let $Q'_6$ be the resulting
quadratic form.  Now one can look for rational linear combinations $v$
of pairs of the basis vectors $e_1, e_2$ and $e_3$ such that
$Q_6'(v)$ has no constant term.  In particular, 
$u_1 = e_1 - 53 e_2$ and $u_2 = 11 e_2 - 15 e_3$ work and
both $Q_6'(u_1)$ and $Q_6'(u_2)$ are divisible by 
$(m^{2} - 5 n^{2} + 10 m - 20 n)$.  Let $A_7'$ be the
Gram matrix for $\tfrac{1}{6}(m^{2} - 5 n^{2} + 10 m - 20 n)^{-1}Q_6'$ with 
respect to the basis 
$\{ \frac {u_1}4, \frac{u_2}5, (m^{2} - 5 n^{2} + 10 m - 20 n)e_2 \}$,
and $Q_7'$ the resulting quadratic form.  Let $Q_7$ and $A_7$
denote the result of reverting $Q_7'$ and $A_7'$ back to our original
variables $m=m-5$, $n = n-2$.  Then $Q_7$ is:
\begin{multline*} Q_7 : 5x_{1}^{2} + 2m x_{1} x_{2} + \left(m^{2} - 5 n^{2} - 4\right) x_{2}^{2}  \\
+ \left(4 m^{2} - 20 n^{2} - 20\right) x_{2} x_{3} + \left(5 m^{2} - 25 n^{2} - 25\right) x_{3}^{2} .
\end{multline*}
The discriminant of $Q_7$ is 
$-25  n^{2}  (m^{2} - 5 n^{2} - 5)$.

Now we can use a vector of the form
$v = (a_1 + b_1 m) e_1 + a_2 e_2 + a_3 e_3$ to removed the
$n^2$ factor from the determinant.  Explicitly, by zeroing out
of coefficients of powers of $m$ in $Q_7(v)$, we find that
$Q_7(v_7) = -25 n^2$ where $v_7 = m e_1 - 5e_2 + 2e_3$.
Let $A_8$ be the Gram matrix for $\tfrac{1}{5} Q_7$
with respect to the basis $\{ e_1, \frac{v_7}{n}, e_3 \}$.
Then the associated quadratic form is
\[ Q_8 :  x_1^2 - 5 x_2^2 + (m^2 - 5n^2 - 5) x_3^2. \]
For any $m, n \in \Q$ such that $\disc L \ne 0$
the form $Q_8 \in \Q[x_1, x_2, x_3]$ is similar to $Q_1$.
Thus for such $m, n$, the Mestre conic $L$ has a rational point
if and only if $\pm (m^2 - 5n^2 - 5)$ is a norm from $\Q(\sqrt 5)$.
(Note that $-1$ is a norm from $\Q(\sqrt 5)$.)
Consequently, the analogue holds for any extension $k \supseteq \Q$.

\subsection{Points at infinity} \label{sec:infty}
Here we consider $k$-rational points on $Y_-(5)$ not coming from
affine coordinates $(z,g,h) \in Y$.  By \cref{prop:necess-cond},
if such a point corresponds to a genus 2 curve $C$,
there are two possibilities for the Igusa--Clebsch invariants:
(1) $(0 : 0 : 0 : 1)$ when $\sqrt 5 \in k$, and
(2) $(8 : 1 : 3 : s)$ where $s \in k^\times$ and $3125s^2-8s$
is a square in $k$.

We wish to determine when $C$ can be defined over $k$
in these cases.  In case (1), $C$ is already defined over $\Q$
with a model $y^2 = x^5 - 1$.  So we only need to analyze
case (2).

Let us consider the Mestre conic for Igusa--Clebsch invariants
as in (2).
After replacing $x_1$ with $2^{-1} \cdot 3^2 \cdot 5^3 x_1$, 
$x_2$ with $2 \cdot 3^3 \cdot 5^5 x_2$ and $x_3$ with 
$2^2 \cdot 3^4 \cdot 5^7 x_3$, the Gram matrix $A_1$ for the 
Mestre conic $Q_1 = L$ is
\[ A_1 = \left(\begin{array}{rrr}
-1 & 2 & -3125 s + 2 \\
2 & -6250 s + 4 & 4 \\
-3125 s + 2 & 4 & -43750 s + 4
\end{array}\right) \]
Then
\[ \det A = 2 \cdot 5^{10} (3125s - 8) s^2. \]

Then
$Q_1(a_1e_1 + a_2e_2 + a_3e_3)$ has constant term 
$-(a_1 - 2 a_2 + 2  a_3)^{2}$.
Now letting $A_2$ be the Gram matrix with respect to the basis
$\{ e_1, \tfrac{1}{25}(2e_1 + e_2), \tfrac{1}{125}(2e_1-e_3) \}$, we see
\[ A_2 = \left(\begin{array}{rrr}
1 & 0 & 25 s \\
0 & -10 s & 2 s \\
25 s & 2 s & -2 s
\end{array}\right). \]

Scale $A_2$ by $s$ and replace $x_2$ and $x_3$ with $x_2/s$ and $x_3/s$,
respectively, to get the equivalent Gram matrix
\[ A_3 = \left(\begin{array}{rrr}
-s & 0 & 25 s \\
0 & -10 & 2 \\
25 s & 2 & -2
\end{array}\right) \]
with quadratic form
\[ Q_3:  -s x_{1}^{2} -10 x_{2}^{2} + 50 x_{1} x_{3} + 4 x_{2} x_{3} -2 x_{3}^{2} \]
This has determinant $2 s (3125s - 8)$, and the
associated quadratic form is $\Q$-equivalent to the diagonal form
\[ 2 x_1^2 + 5s x_2^2 - (3125s - 8) x_3^2. \]
Assuming that $s(3125s-8)$ is a square in $k^\times$, this form is
$k$-equivalent to the forms
\[ 2 x_1^2 + 5s x_2^2 - s x_3^2 \sim 2s x_1^2 - (x_3^2 - 5x_2^2). \]
Clearly, this has a rational point if and only if  $2s$ is a norm from $k(\sqrt 5)$
(which is automatic if $\sqrt 5 \in k$).

%
%

\section{Moduli for rational curves}

Here we state our main result and complete the proof.

\begin{thm} \label{thm:moduli} 
Let $C$ be a genus $2$ curve with RM-5 defined over $k$.
Then the Igusa--Clebsch invariants $(I_2 : I_4 : I_6 : I_{10}) \in 
\mathbb P^3_{1,2,3,5}$ are of one of the following forms:

\begin{enumerate}
\item $\left(24 g + 6 : 9 g^{2} : 81 g^{3} + 18 g^{2} + 36 h : 4 h^{2}\right)$ for a $k$-rational solution $(z,g,h)$ to \eqref{eq:EK} such
that such that $30g+4$ is a norm from $k(\sqrt 5)$ and $hz(8h-9g^2) \ne 0$;

\item $(8 : 1 : 3 : s)$ where $s \in k^\times$
such that $s(3125s-8)$ is a square in $k^\times$ and
 $2s$ is a norm from $k(\sqrt 5)$;

\item $(12  (4 g + 1) : 36  g^{2} : 36  (18 g + 13)  g^{2} : 162  g^{4})$ where $g \in k^\times$ such that
$-3(128g+9)$ is a square in $k$;

\item \begin{multline*}
 \left( 20 (2 m^{2} - 3) : 25  (m - 3)^{2}  (m + 3)^{2} : \right. \\
\left.
5  (m + 3)^{2}  (75 m^{4} - 378 m^{3} + 428 m^{2} + 474 m - 711) : 8  (m - 2)^{4}  (m + 3)^{6}) \right)
\end{multline*}
where $m \in k$ or $m = \sqrt 5$;

\item $(8 : 1 : 3 : \frac 8{3125})$;

\item $(0 : 0 : 0 : 1)$ if $\sqrt 5 \in k$.\end{enumerate}
Cases (1) and (2) correspond to $\Aut_\C^\red(C) = \{ 1 \}$.
Cases (3)--(5) correspond to $\Aut_\C^\red(C)$ containing an involution,
and case (6) corresponds to $\# \Aut_\C^\red(C) = 5$.

Conversely, if $C$ is a genus $2$ curve over $\C$ with Igusa--Clebsch
invariants in one of the forms (1)--(6), 
then $C$ can be defined over $k$ and $C$ has potential
RM-5.  Moreover, in case (1), if $\End_\C(\Jac(C))$ is commutative,
then $C$ has RM-5 defined over $k$.
\end{thm}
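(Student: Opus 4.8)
The plan is to assemble \cref{thm:moduli} from three ingredients already in place: the necessary conditions of \cref{prop:necess-cond}, the Mestre conic reductions of \cref{subsection:conic-reduction-generic} and \cref{sec:infty}, and the classical fact (recalled in \cref{section:mestre-conic-background}) that the Mestre conic $L$ is singular exactly when $\Aut^\red_\C(C)$ contains an involution. By \cref{prop:necess-cond}, the invariants of any $C/k$ with RM-5 have one of three shapes: $(0:0:0:1)$, the family $(8:1:3:s)$, or the affine family attached to a $k$-rational point $(z,g,h)$ on $Y$ with $h \neq 0$. Since $L$ is singular iff $\disc L = 0$ and $\disc L$ is a nonzero scalar multiple of $h^2(8h-9g^2)^2 z^2$, the affine family splits according to whether $(8h-9g^2)z$ vanishes, while the order-$5$ case is exactly $(0:0:0:1)$. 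This partitions the three shapes into the six cases of the statement.

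First I would dispose of the two cases where $L$ is nonsingular. For the generic affine case $hz(8h-9g^2) \neq 0$, \cref{thm:mestre-conic} says $C$ is definable over $k$ iff $L$ has a $k$-point, and the reduction of \cref{subsection:conic-reduction-generic} shows $L$ is $k$-equivalent to $Q_8 : x_1^2 - 5x_2^2 + (m^2-5n^2-5)x_3^2$; hence definability is equivalent to $m^2-5n^2-5 = 30g+4$ being a norm from $k(\sqrt 5)$, giving case (1). For the family $(8:1:3:s)$ with $3125s-8 \neq 0$, the computation of \cref{sec:infty} reduces $L$, under the hypothesis that $s(3125s-8)$ is a square, to a form with a $k$-point iff $2s$ is a norm from $k(\sqrt 5)$, giving case (2). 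The refinement that a commutative $\End_\C(\Jac(C))$ forces RM-5 over $k$ in case (1) is then immediate from \cref{prop:RMdef}, applied to the $k$-rational point $(z,g,h)$ on $Y_-(5)$ and the $k$-model for $C$ produced by \cref{thm:mestre-cubic}.

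Next I would treat the singular loci, where \cite{cardona-quer} makes $C$ automatically definable over its field of moduli, hence over $k$ once the invariants are $k$-rational. The task is to parametrize these loci. Substituting $h = \tfrac 98 g^2$ into the invariant formula and rescaling by $\lambda = 2$ reproduces case (3); here $k$-rationality of $(z,g,h)$ on $Y$ forces the right-hand side of \eqref{eq:EK} to be a square, and factoring it as $-\tfrac{27}{16}g^2(3g-4)^2(128g+9)$ identifies its square class as $-3(128g+9)$. Setting $z = 0$, i.e. $n = 0$ in \eqref{eq:gh2mn}, gives $g = (m^2-9)/30$, and substituting into the invariants yields case (4); here $h \in k$ forces $m \in k$ away from $m^2(m^2-5) = 0$, while $m = \sqrt 5$ recovers the point $(g,h) = (-\tfrac 2{15}, \tfrac 2{3125})$ on the locus excluded by \eqref{eq:gh2mn}. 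The degenerate value $s = 8/3125$ makes $L$ singular and gives case (5), and $(0:0:0:1)$, already defined over $\Q$ by $y^2 = x^5-1$, is case (6).

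For the converse, I would note that each shape (1)--(6) arises from a point on $Y_-(5)$ (through $Y$ in cases (1),(3),(4) and through Wilson's coordinates in cases (2),(5),(6)); since $I_{10} \neq 0$ the associated abelian surface is a Jacobian carrying RM-5 over $\C$, so $C$ has potential RM-5. Definability over $k$ is then supplied by \cref{thm:mestre-conic} together with the stated norm conditions in cases (1)--(2) and by \cite{cardona-quer} in cases (3)--(6), and the case-(1) refinement is \cref{prop:RMdef}. I expect the main obstacle to be not the generic case---whose difficulty is entirely absorbed by the conic reduction of \cref{subsection:conic-reduction-generic}---but the explicit identification and parametrization of the singular loci in cases (3)--(5), together with checking that the six cases are exhaustive and pairwise disjoint and that the various square and norm conditions (arising from $k$-rationality on $Y_-(5)$, via the relation $z^2 = f(g,h)$ and via Wilson's coordinates, and from the two conic reductions) are mutually consistent.
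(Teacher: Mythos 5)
Your strategy is the same as the paper's: necessity of the three shapes from \cref{prop:necess-cond}, the conic reductions of \cref{subsection:conic-reduction-generic} and \cref{sec:infty} for the nonsingular cases, and \cite{cardona-quer} together with explicit parametrization of the singular loci for cases (3)--(6). Your computations for cases (3)--(5) (the square class $-3(128g+9)$, the scaling $\lambda = 2$, and the observation that $h \in k$ forces $m \in k$ away from $m \in \{0, \pm\sqrt 5\}$) agree with what the paper does in \cref{sec:mest-sing}.

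The genuine gap is in case (1). You claim that whenever $hz(8h-9g^2) \neq 0$ the reduction shows $L$ is $k$-equivalent to $Q_8 = x_1^2 - 5x_2^2 + (m^2-5n^2-5)x_3^2$. But the final steps of that reduction take place in the $(m,n)$-coordinates, and the transformation \eqref{eq:gh2mn} is invertible only for $g \notin \{-\tfrac{3}{10}, -\tfrac{2}{15}\}$; moreover the reduction itself divides by $m^2-5n^2 = 30g+9$ and by $m^2-5n^2-5 = 30g+4$, so it needs $(30g+9)(30g+4) \neq 0$, a strictly stronger condition than $\disc L = c\, h^2(8h-9g^2)^2 z^2 \neq 0$. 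The failure is real, not cosmetic: at $g = -\tfrac{2}{15}$ one has $m^2-5n^2-5 = 0$, so $Q_8$ is degenerate while $L$ is nondegenerate, and no $k$-equivalence can exist. Since case (1) of the theorem does not exclude these $g$-values, both directions of your argument are unproven on this locus. The paper closes this in \cref{sec:gh2mn}: for $g \in \{-\tfrac{3}{10}, -\tfrac{2}{15}\}$, equation \eqref{eq:EK} degenerates to $z^2 = \tfrac{4}{3125}(3125h - c)^2$ with $c = 27$ resp.\ $2$, so a $k$-rational point with $z \neq 0$ exists only if $\sqrt 5 \in k$; in that case the norm condition in (1) is vacuous, and one checks directly (by exhibiting an explicit rational point on the conic, found in Sage) that the Mestre conic always has a $k$-point. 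You would need this, or some substitute for it, to complete case (1).
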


In this theorem, by ``a norm from $k(\sqrt 5)$'' we mean in the image of
the relative norm map from $k(\sqrt 5)$ to $k$.  Thus such a norm condition
is automatically satisfied if $\sqrt 5 \in k$.

In \cref{sec:gh2mn}, we reformulate condition (1) in terms of $(m,n)$,
which removes the need to check \eqref{eq:EK} to determine the existence
of a $k$-rational point $(z,g,h) \in Y$ given $g, h \in k$.

\begin{rem} Suppose $k=\Q$ now, and that $C$ is a genus 2 curve over
$\Q$ with Igusa--Clebsch invariants of one of the forms
(1)--(5) in the theorem.  We would like to be able to say
when $C$ (or a twist) actually has RM-5 defined over $k$.  
Write $A = \Jac(C)$.
Generically, $\End_\C(A) \simeq \Z[\frac{1+\sqrt 5}2]$ in case (1) so
the RM-5 will be defined over $\Q$, but there 
 seems to be no simple way to describe the moduli points where
 $A$ has (split or non-split) quaternionic multiplication over $\C$.  
 We do not know whether the RM-5 must be defined over $\Q$
 if $\End_\C(A)$ is not commutative.
 
 In case (2), we also expect that generically 
 $\End_\C(A) \simeq \Z[\frac{1+\sqrt 5}2]$, and the RM will
 be defined over $\Q$---however we have not checked that
 the points satisfying (2) always correspond rational points on
 $Y_-(5)$ so cannot apply \cref{prop:RMdef}.  Still, one can
 check in examples for case (2), e.g., $s=\frac 2{25}$, that
 one gets a genus 2 curve with RM-5 defined over $\Q$.
 
In cases (3)--(5), $\End_\C(A)$ is an order in a $2 \times 2$ matrix algebra,
hence not commutative, and \cref{prop:RMdef} does not apply.  
Here $C$ has more than just quadratic twists, and one twist
may have RM-5 defined over $\Q$, and another may not.
For instance if $C : y^2 = x^6 - x^4 + 4x^2 -1$, which has
 Igusa--Clebsch invariants $(88 : 169 : 28561 : 57122)$
 corresponding to $g=-\frac{13}{96}$ in case (3),
 then $C$ has RM-5 defined over $\Q$, but the twist
 corresponding to $x \mapsto \sqrt{-1} x$ does not.
 This may be checked, for instance, by computing Galois
 groups and using \cref{prop:wilson}.
We do not know whether there will always be some
twist with RM-5 defined over $\Q$ in these cases.
\end{rem}

\begin{proof}
Suppose $C$ is a genus 2 curve over $\C$ with RM-5
and Igusa--Clebsch invariants $(I_2 : I_4 : I_6 : I_{10})$
defined over $k$.  If $C$ as well as the RM-5 is defined over $k$, then by
\cref{prop:necess-cond}, we know that the Igusa--Clebsch invariants
must be of the form $(0 : 0 : 0 : 1)$ (only if $\sqrt 5 \in k$), 
$(8 : 1 : 3 : s)$ for $s \in k^\times$
such that $3125s^2-8s$ is a square in $k$, or they correspond to a
$k$-rational point $(z,g,h) \in Y$ with $h \ne 0$, so we may assume
our Igusa--Clebsch invariants take one of these forms.

As explained earlier, 
$C$ has a model over $k$ if and only if the Mestre conic has a 
$k$-rational point or $C$ has extra automorphisms.  
Thus, to prove both directions of the theorem, it will suffice to show 
that: (i) when $C$ has no extra automorphisms, 
the Mestre conic has a $k$-rational point exactly in cases (1) and (2);
and (ii) the Igusa--Clebsch
invariants from \cref{prop:necess-cond} corresponding to
curves with extra automorphisms are described exactly by cases 
(3)--(6).

If the Igusa--Clebsch invariants are $(0 : 0 : 0 : 1)$, then $C$ has a model 
over $\Q$ given by $y^2 = x^5 - 1$, and the RM-5 is defined over 
$\Q(\sqrt 5)$.  This verifies the theorem (in both directions) 
for these Igusa--Clebsch invariants, i.e., case (6).

Assume now that the Igusa--Clebsch invariants come from a $k$-rational point
$(z,g,h) \in Y$ with $h \ne 0$. 
 
First suppose $z(8h-9g^2) \ne 0$, so that the Mestre conic is nonsingular
and $C$ has no extra automorphisms.  
Then the reduction we performed
over $\Q$ in \cref{subsection:conic-reduction-generic} implies that 
the Mestre conic has a $k$-rational point if and only if $30g+4 = m^2 -5n^2 -5$
is a norm from $k(\sqrt 5)$, except in the two special cases 
$g \in \{ - \frac 3{10}, - \frac 2{15} \}$, where there is not a one-to-one
correspondence between the $(z,g,h)$ and $(m,n)$ coordinates.
In \cref{sec:gh2mn} below, we check that one has a $k$-rational $(z,g,h) \in Y$
for $g \in \{ - \frac 3{10}, - \frac 2{15} \}$ if and only if $\sqrt 5 \in k$,
and in this case the Mestre conic always has a $k$-rational point.
This, together with \cref{prop:RMdef}, proves (both directions of)
the theorem in case (1).

For the cases where the Mestre conic is singular, 
the $k$-rational $(z,g,h) \in Y$ with $z(8h-9g^2) = 0$
correspond to Igusa--Clebsch invariants of the forms in cases (3) and (4).
The details are given in \cref{sec:mest-sing}.

Finally, suppose the Igusa--Clebsch invariants are of the form $(8:1:3:s)$,
where $s \in k^\times$ and $3125s^2-8s$ is a square in $k$.
Both directions of the theorem in case (2) follows from the reduction
of the Mestre conic in \cref{sec:infty}.  Case (5) follows from 
\cref{sec:mest-sing}.
\end{proof}

\subsection{Translation to  $(m,n)$-coordinates} \label{sec:gh2mn}
Here we explain how to translate \cref{thm:moduli} into the rational
model $\mathbb P_{m,n}^2$ for $Y_-(5)$, and treat the
exceptional cases $g \in \{ - \frac 3{10}, - \frac 2{15} \}$ in the proof of
\cref{thm:moduli}.

Recall that there is a one-to-one correspondence between $k$-rational 
coordinates $(z,g,h) \in Y$
and $k$-rational coordinates $(m,n) \in \mathbb A^2$
such that
$g = \frac{m^2-5n^2-9}{30} \not \in \{ - \frac 3{10}, -\frac 2{15} \}$.

If $g = -\frac 3{10}$, the equation for $Y$ becomes
$z^2 = \frac{4}{3125}(3125h - 27)^2.$
Hence there are no $k$-rational points $(z,-\frac 3{10},h)$ on $Y$
if $\sqrt 5 \not \in k$.  If $\sqrt 5 \in k$, then for all $h \in k$,
there is a $k$-rational $(z,-\frac 3{10},h) \in Y$.
Here the associated Mestre conic is nonsingular if $h \not \in \{ 0,
\frac{81}{800}, \frac{27}{3125} \}$, and always has a $k$-rational point.
For instance, in Sage we find the $k$-rational point
$(\frac{64000}{81}h^2 - \frac{2368}{75}h + \frac{284}{3125} :\frac{128}{15}h - \frac{51}{1250}: h + \frac 9{1000})$.

 If $g =  -\frac 2{15}$, the equation for $Y$ becomes
$z^2 = \frac{4}{3125}(3125h - 2)^2.$
Similarly, there are no $k$-rational points $(z,-\frac 2{15},h)$ on $Y$
if $\sqrt 5 \not \in k$, but if $\sqrt 5 \in k$, then there is a $k$-rational
 $(z,-\frac 2{15},h) \in Y$ for all $h \in k$.
 The associated Mestre conic is nonsingular if
 $h \not \in \{ 0, \frac{1}{50}, \frac{2}{3125} \}$
Again, one may check in Sage that the Mestre conic always
has a rational point.

These calculations complete the proof of \cref{thm:moduli} in
case (1).  Consequently, we may alternatively formulate case (1)
of the theorem as saying that $(I_2 : I_4 : I_6 : I_{10})$ is of one
of the following forms:

\begin{enumerate}
\item[(1a)] 
\begin{multline} \label{eq:mnICs}
 (-20 (-2 m^{2} + 10 n^{2} + 3) :  
25(-m^{2} + 5 n^{2} + 9)^{2} : \\
-5(-75 m^{6} + 1125 m^{4} n^{2} - 5625 m^{2} n^{4} + 9375 n^{6} - 72 m^{5} + 720 m^{3} n^{2} - 1800 m n^{4} \\
+ 1165 m^{4} - 11650 m^{2} n^{2} + 29125 n^{4} + 360 m^{3} - 1800 m n^{2} - 5985 m^{2} + 29925 n^{2} + 6399) : \\
8 (m^{5} - 10 m^{3} n^{2} + 25 m n^{4} + 5 m^{4} - 50 m^{2} n^{2} + 125 n^{4} - 5 m^{3} + 25 m n^{2} - 45 m^{2} + 225 n^{2} + 108)^{2})
\end{multline}
where $(m,n) \in k^2$ such that  $m^2-5n^2-5$ is a norm from $k(\sqrt 5)$,
$n(m^2-5n^2)(m^2-5n^2-5) \ne 0$ and
$8 m^{5} - 80 m^{3} n^{2} + 200 m n^{4} - 85 m^{4} + 850 m^{2} n^{2} - 2125 n^{4} - 40 m^{3} + 200 m n^{2} + 1890 m^{2} - 9450 n^{2} - 9261 \ne 0$;

\item[(1b)] $(-12: 81 : 36000h - 567 : 400000h^2)$ if $\sqrt 5 \in k$
and $h \in k \setminus \{ 0, \frac{81}{800}, \frac{27}{3125} \}$; or

\item[(1c)] $(14 : 4 : 4500h + 16 : 12500h^2)$
if $\sqrt 5 \in k$ and $h \in k \setminus \{ 0, \frac{1}{50}, \frac{2}{3125} \}$.
\end{enumerate}

In particular, when $k=\Q$, we can deduce the following precise 
interpretation of \cref{thm:main} from \cref{thm:moduli}: The
set of all genus 2 curves $C$ with RM-5 over $\Q$ up to 
$\C$-isomorphism such that $\Aut_\C^{\red}(C) = \{ 1 \}$
excluding the 1-parameter family in case (2) correspond to
points $(m,n)$ with Igusa--Clebsch invariants as in (1a).  
Moreover, each tuple of Igusa--Clebsch invariants
as in (1a) comes from such a curve, except possibly when
these Igusa--Clebsch invariants lead to a non-commutative 
endomorphism algebra, in which case we only know that
such $(m,n)$ corresponds to a curve 
defined over $\Q$ with potential RM-5.
Further, distinct points $(m,n)$ as in (1a) correspond
to distinct $\C$-isomorphism classes of genus 2 curves
by \cref{prop:necess-cond}.

\subsection{Singularities of the Mestre conic} \label{sec:mest-sing}
Here we describe the $k$-rational Igusa--Clebsch invariants
of types (2) and (3) in \cref{prop:necess-cond} for which the
the Mestre conic is singular.  By \cite{cardona-quer}, these
invariants always yield a genus 2 curve defined over $k$.
This will complete the proof of \cref{thm:moduli}.

First, as in \cref{subsection:conic-reduction-generic}, 
let $L$ be the Mestre conic associated to a $(z,g,h) \in Y$.
Then the Mestre conic is singular if and only if $h=0$, 
$h=\frac {9g^2}8$ or $z=0$.  

We remark that the curve $h=0$ on $Y$ is given by 
$z^2 = -54 (6g + 1)^2  g^3$,
and the $k$-rational points are parametrized by $(g,0)$ where
$-6g$ is a square in $k$.  However, $h=0$ means that $I_{10} = 0$, so
these points do not correspond to genus 2 curves.

The curve $h=\frac {9g^2}8$ on $Y$ is given by 
$z^2 = -\frac{27}{16}  (128g + 9)  g^2  (3g - 4)^2,$
and the $k$-rational points are parametrized by 
$(g, \frac{9g^2}8)$ where $-3(128g+9)$ is a square in $k$.
This completes case (3) of the theorem.

Now suppose $z=0$, which means that either
$g \in \{ - \frac 3{10}, - \frac 2{15} \}$ or $n=0$.  
If $g = -\frac 3{10}$ or $g = -\frac 2{15}$,
then $h = \frac{27}{3125}$ or $h = \frac{2}{3125}$, respectively,
and these are clearly $k$-rational points on $Y$.
The corresponding Igusa--Clebsch invariants are
$(20 : 225 : 1185 : -384)$ and $(70 : 100 : 2360 : 16)$,
respectively.  
As notes in \cite{EK},
the $k$-rational points on $Y$ with $n=0$ are given by
\[ (z, g, h) = \left(0, \frac{m^2-9}{30}, \frac{(m-2)^2(m+3)^3}{12500}\right),
\quad m \in k. \]
Viewing this as a map from points $(m,0)$ to $(0,g,h)$,
note that $m = 0$ and $m = \pm \sqrt 5$
respectively map to $(g,h) = (-\tfrac{3}{10}, \tfrac{27}{3125})$ and $(-\tfrac{2}{15}, \tfrac{2}{3125})$.
This gives case (4) of the theorem.

Now we consider the ``points at infinity'' discussed in \cref{sec:infty}.
The Mestre conic associated to Igusa--Clebsch invariants $(8 : 1 : 3: s)$
for $s \in k^\times$ is singular if and only if $s = \frac 8{3125}$.
In terms of Wilson's moduli, this point corresponds to
$(z_6, s_2, \sigma_5) = (0, -\frac 52, \frac 12)$.  Here Wilson's 
discriminant $\Delta'$ is 0.  Using Magma, we can construct
a rational genus 2 curve with invariants $(8 : 1 : 3 : \frac 8{3125})$,
namely
\begin{equation} \label{eqn:813s-curve}
 y^2 = f(x) = (2 x^{3} - 2 x^{2} - x - 1)(x^{3} - x^{2} + 2 x + 2). 
\end{equation}
This yields case (5) of the theorem.

\begin{rem} Calculations in Magma indicate that the curve in
\eqref{eqn:813s-curve} has conductor $800^2$, and corresponds
to the weight 2 modular form $f(z) = q - \sqrt 5 q^3 - 2 \sqrt 5 q^7 +
2 q^9 - \sqrt 5 q^{11} + \cdots$ with Fourier coefficient ring $\Z[\sqrt 5]$
and LMFDB label \verb+800.2.a.l+.
\end{rem}

%
%

\section{Generic models}
\label{sec:models}

In this section we give explicit rational Weierstrass models for 
$(m,n)$ in the birational model $\PP^2_{m,n}$ for $Y_-(5)$.

\begin{prop} \label{prop:model}
  Let $k \subseteq \C$ be a field which does not contain $\sqrt{5}$. For any $m,n \in k$ such that $-(m^2 - 5n^2 - 5)$ is the norm of some nonzero element $\eta \in k(\sqrt{5})/k$, let $\mu \coloneqq m + n\sqrt{5}$ and define $C/k$ be the curve with Weierstrass model
  \begin{align*}
    y^2 = \text{\emph{Tr}}\!\left(\mu^2\eta^3\!\left(\frac{1-x\sqrt{5}}{1+x\sqrt{5}}\right)^3 - 2N(\mu)\mu\eta^2\!\left(\frac{1-x\sqrt{5}}{1+x\sqrt{5}}\right)^2 - 5N(\mu)(N(\mu)-5)\right)(1 - 5x^2)^3,
  \end{align*}
  where $N$ and $\text{\emph{Tr}}$ denote the norm and trace from $k(\sqrt{5})$ to $k$ respectively. When $C$ is a genus $2$ curve, the Igusa--Clebsch invariants of $C$ are as in \eqref{eq:mnICs}, 
  i.e. $C$ corresponds to the point $(m,n)$ in
  the $\PP^2_{m,n}$ birational model for $Y_-(5)$.
\end{prop}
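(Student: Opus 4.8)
The plan is to realize the stated sextic as the output of Mestre's construction (\cref{thm:mestre-cubic}) applied to the $k$-rational point on the Mestre conic furnished by $\eta$, and then to invoke that theorem to read off the invariants. First I would record that, by the reduction carried out in \cref{subsection:conic-reduction-generic}, for generic $(m,n)$ the Mestre conic $L = Q_1$ is $k$-equivalent to the diagonal conic $Q_8 : x_1^2 - 5x_2^2 + (m^2-5n^2-5)x_3^2 = 0$. The hypothesis $N(\eta) = -(m^2-5n^2-5)$, with $\eta = a + b\sqrt5$, gives the $k$-rational point $(x_1 : x_2 : x_3) = (a : b : 1)$ on $Q_8$. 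Since $Q_8$ has a rational point, I would parametrize all of its rational points: writing $\xi = x_1 + x_2\sqrt5$, the equation reads $N(\xi) = N(\eta)\, x_3^2$, so $\xi/x_3$ ranges over the elements of $k(\sqrt5)$ of norm $N(\eta)$, i.e. over $\eta\rho$ with $N(\rho) = 1$. By Hilbert 90 the norm-one elements are exactly $\rho = w \coloneqq \frac{1-x\sqrt5}{1+x\sqrt5}$ as $x$ runs through $\PP^1(k)$ (up to the single value $\rho = -1$), giving the clean parametrization $\xi = \eta(1-x\sqrt5)^2$, $x_3 = 1-5x^2$, equivalently $\xi/x_3 = \eta w$.

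The key structural point, which explains the shape of the answer, is that the Mestre cubic $M$ lies in $k[x_1,x_2,x_3]$ and is therefore a polynomial in $\xi, \bar\xi, x_3$ symmetric under $\xi \leftrightarrow \bar\xi$. Substituting the parametrization and factoring out $x_3^3 = (1-5x^2)^3$ turns $M$ into $(1-5x^2)^3$ times a polynomial of degree $\le 3$ in $\eta w$ and $\bar\eta w^{-1}$; since $M$ is Galois-stable this polynomial is a trace, accounting both for the overall factor $(1-5x^2)^3$ and for the $\text{Tr}(\,\cdots\,)$ in the statement. The coefficients $\mu^2$, $-2N(\mu)\mu$, $-5N(\mu)(N(\mu)-5)$ arise because the coefficients of $M$ are polynomials in the Clebsch invariants attached to $(m,n)$, and after reorganizing into the $\xi$-adapted basis these assemble into the displayed expressions in $\mu = m+n\sqrt5$ and $N(\mu) = m^2-5n^2$ (note $N(\mu)-5 = -N(\eta)$, so the constant term is $5N(\mu\eta)$). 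Concretely I would obtain the parametrization of $L = Q_1$ itself by pushing the parametrization of $Q_8$ backward through the explicit changes of variables of \cref{subsection:conic-reduction-generic} (each linear over $\Q(m,n)$) together with \eqref{eq:gh2mn}, substitute into $M$, and simplify.

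By \cref{thm:mestre-cubic} the resulting $y^2 = M(x_1(x), x_2(x), x_3(x))$ is a model over $k$ for the curve with the prescribed Clebsch, equivalently Igusa--Clebsch, invariants, so once this model is identified with the stated $f(x)$ its invariants are automatically \eqref{eq:mnICs}; the genus-$2$ hypothesis is precisely the condition that the sextic has no repeated factor, guaranteed by the nonvanishing conditions recorded in \eqref{eq:mnICs}. I would also note that replacing $\eta$ by $\eta\rho_0$ for a constant norm-one $\rho_0$ amounts to a M\"obius reparametrization of $w$, hence of $x$, so the model is well defined up to invariant-preserving transformations and the invariants do not depend on the choice of $\eta$.

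The main obstacle is the bookkeeping in the middle step: correctly transporting the conic parametrization back through the chain $Q_8 \to \cdots \to Q_1$ (the reverse of the reduction) together with \eqref{eq:gh2mn}, applying the explicit cubic $M$, and verifying that the simplification collapses to exactly the stated trace expression with the claimed coefficients. As a purely computational alternative one can bypass Mestre's cubic entirely and verify the proposition directly: expand the stated $f(x) \in k[x]$, compute $I_2, I_4, I_6, I_{10}$, and check equality with \eqref{eq:mnICs} in the ring $k[m,n,a,b]/(a^2 - 5b^2 - 5 + m^2 - 5n^2)$, where the relation $N(\eta) = 5 - N(\mu)$ eliminates all dependence on $\eta$.
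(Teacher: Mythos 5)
Your proposal is correct and matches the paper's own proof in all essentials: the paper likewise transports the reduction of \cref{subsection:conic-reduction-generic} to the Mestre cubic (applying the same transformation $T$ and a rescaling), parametrizes the reduced conic through the point given by $\eta$ via exactly the norm-one elements $\frac{1-x\sqrt 5}{1+x\sqrt 5}$, rewrites the reduced cubic as a trace/norm expression in $\xi = x_1 + x_2\sqrt 5$ and $\mu$, and substitutes, invoking \cref{thm:mestre-cubic} to identify the invariants. The only difference is bookkeeping direction (you pull the parametrization back to the original conic and cubic rather than pushing the cubic forward), which yields the identical composite computation.
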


Note that the right hand side of the Weierstrass equation given in \cref{prop:model} is indeed a sextic in $x$; the factor of $(1 - 5x^2)^3$ clears denominators.

\begin{rem} Since $m^2 - 5n^2 - 5 + u^2 - 5v^2 = 0$ is a quadric in $\PP^4$
it is birational to $\PP^3$, so one may generically express the family of
curves in \cref{prop:model} in terms of a 3-parameter $(a,b,c)$.  For instance,
one may generically write 
\[ v = (4a+2c)/(5a^2-b^2+5c^2-1), \quad
m = 5av - 2, \quad n=-bv, \quad u = 5cv-1 \]
to get a 3-parameter family of genus 2 curves with RM-5.  
However, the
resulting models are rather complicated and we omit them.
\end{rem}

\begin{prop} \label{prop:modelQsqrt5}
  Let $k \subseteq \C$ be a field containing $\sqrt{5}$. For any $m,n \in k$, define $C/k$ be the curve with Weierstrass model
  \begin{align*}
    y^2 =\quad &(m - n\sqrt{5})^2x^6 - 2(m^2 - 5n^2)(m - n\sqrt{5})x^5 - 10(m^2 - 5n^2)(m^2 - 5n^2 - 5)x^3\\
    -&2(m^2 - 5n^2)(m^2 - 5n^2 - 5)^2(m + n\sqrt{5})x - (m^2 - 5n^2 - 5)^3(m + n\sqrt{5})^2.
  \end{align*}
When $C$ is a genus $2$ curve, the Igusa--Clebsch invariants of $C$ are as in \eqref{eq:mnICs}, 
  i.e. $C$ corresponds to the point $(m,n)$ in
  the $\PP^2_{m,n}$ birational model for $Y_-(5)$.
\end{prop}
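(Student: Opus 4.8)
The plan is to treat \cref{prop:modelQsqrt5} as a verification. One has an explicit sextic $f(x)=\sum_{i=0}^{6}c_ix^i$ whose coefficients are the displayed polynomials in $m$, $n$ and $\sqrt5$, and one must check that its Igusa--Clebsch invariants, viewed as a point of $\PP^3_{1,2,3,5}$ over $k(m,n)$, coincide with \eqref{eq:mnICs}. Since $I_2,I_4,I_6,I_{10}$ are the classical degree $2,4,6,10$ polynomial functions of $c_0,\dots,c_6$ (with $I_{10}=\disc f$), the most direct proof is to substitute the $c_i$ into these formulas and compare. Concretely, I would determine the single scaling factor $\lambda=\lambda(m,n)$ forced by matching the weight-$1$ coordinate, i.e.\ $I_2(f)=\lambda\cdot\bigl(-20(-2m^2+10n^2+3)\bigr)$, and then verify the three weighted-homogeneous identities $I_4(f)=\lambda^2 I_4^{\eqref{eq:mnICs}}$, $I_6(f)=\lambda^3 I_6^{\eqref{eq:mnICs}}$, $I_{10}(f)=\lambda^5 I_{10}^{\eqref{eq:mnICs}}$; clearing the denominator of $\lambda$ turns these into honest polynomial identities in $k[m,n]$. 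A consistency check built into the comparison is that all four invariants of $f$ must be $\sqrt5$-free, even though the $c_i$ are not, because \eqref{eq:mnICs} has rational coefficients; this cancellation is forced by the fact that $C$ is defined over $k$.

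Conceptually, I would justify \emph{why} this model carries the right invariants — and how one is led to it — through Mestre's cubic. Because $k\supseteq\Q(\sqrt5)$, the reduced Mestre conic $Q_8:x_1^2-5x_2^2+(m^2-5n^2-5)x_3^2=0$ from \cref{subsection:conic-reduction-generic} factors as $(x_1-\sqrt5\,x_2)(x_1+\sqrt5\,x_2)=-(m^2-5n^2-5)x_3^2$ and hence carries an obvious $k(m,n)$-rational parametrization $x\mapsto(x_1(x),x_2(x),x_3(x))$. Pulling this back through the explicit chain of linear changes of variable $Q_8\to Q_7\to\cdots\to Q_1=L$ (which are $F$-equivalences over $\Q(m,n)$, valid away from $\disc L=0$) yields a rational parametrization of the original Mestre conic $L$, and substituting it into the Mestre cubic $M$ produces a model $y^2=M(x_1(x),x_2(x),x_3(x))$. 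By \cref{thm:mestre-cubic} this is a curve with exactly the Igusa--Clebsch invariants \eqref{eq:mnICs}. Since the displayed sextic differs from it only by a fractional-linear substitution in $x$ together with rescalings of $x$ and $y$ — all genuine isomorphisms — the invariants are preserved, which is the assertion of the proposition.

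The main obstacle is computational rather than structural. For the direct route, the $I_{10}=\disc f$ comparison is a degree-$10$ invariant of a sextic whose coefficients are themselves degree-$6$ in $(m,n)$ over $\Q(\sqrt5)$, so the identity is far too large to expand by hand and is best certified in Sage or Magma, as elsewhere in the paper. For the conceptual route, the difficulty is instead the \emph{simplification} step: the raw output of Mestre's cubic is an unwieldy sextic, and one must find the correct Möbius transformation and scalings that collapse it into the symmetric shape displayed — with the vanishing $x^4$ and $x^2$ coefficients and the conjugate-symmetric pattern $c_6=\overline{\mu}^{\,2}$, $c_0=-t^3\mu^2$, where $\mu=m+n\sqrt5$ and $t=m^2-5n^2-5$. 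Finally, regarding the hypothesis ``when $C$ is a genus $2$ curve'', one records that $f$ has degree $6$ and nonzero discriminant for generic $(m,n)$, and that the equality of invariants is an identity of polynomials which holds wherever $I_{10}\neq0$, so no separate argument is needed on the degenerate locus.
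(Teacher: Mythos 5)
Your conceptual route is essentially the paper's own proof: the paper likewise applies the reduction transformation from \cref{subsection:conic-reduction-generic} simultaneously to the Mestre conic and the Mestre cubic, parametrizes the reduced conic $x_1^2-5x_2^2+(m^2-5n^2-5)x_3^2=0$ using a $k$-rational point available because $\sqrt{5}\in k$, and substitutes into the reduced cubic via \cref{thm:mestre-cubic}. The one ingredient the paper supplies that your write-up leaves open is the choice of parametrizing point, namely the point at infinity $(\sqrt{5}:1:0)$, which makes the ``straightforward calculation'' produce the displayed symmetric sextic (with vanishing $x^4$- and $x^2$-coefficients) directly, so the M\"obius-transformation hunt you flag as the main obstacle never arises.
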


Note that both the $x^4$- and $x^2$-coefficients are zero in this model.

\begin{proof}[Proof of \cref{prop:model,prop:modelQsqrt5}]
In \cref{subsection:conic-reduction-generic}, we found a linear transformation $T$ defined over $\Q(m,n)$ and a scaling factor $c \in \Q(m,n)$ such that
$$cL_0(T(x_1,x_2,x_3)) = x_1^2 - 5x_2^2 + (m^2 - 5n^2 - 5)x_3^2,$$
whenever $\text{disc}\,L_0 \neq 0$, where $L_0$ is the Mestre conic associated to the Igusa--Clebsch invariants in \eqref{eq:mnICs}.
 Applying the same transformation $T$ to the Mestre cubic $M_0$ and rescaling by some $c' \in \Q(m,n)$ yields
\begin{align*}
  c'M_0(T(x_1,x_2,x_3)) = \quad & (m+5n^2)x_1^3 + 30mnx_1^2x_2 + 15(m^2+5n^2)x_1x_2^2 + 50mnx_2^3\\
  &-(2m-3)(m^2-5n^2)x_1^2x_3 - 20n(m^2-5n^2)x_1x_2x_3  \\
 &-5(2m+3)(m^2-5n^2)x_2^2x_3 - 2(m^2-5n^2-5)(m^2-5n^2)x_3^3.
\end{align*}
Define $L$ and $M$ to be these reduced forms of $L_0$ and $M_0$ respectively.

We first consider the case where $\sqrt{5} \not\in k$. By inspection, $L(k)$ has no points with $x_3 = 0$. Suppose that $(u_0\,:\,v_0\,:\,1) \in L(k)$. Parametrizing $L(k)$ in the usual way using this point gives
\begin{equation*} \label{eq:mestre-conic-param}
  \begin{aligned}
    &\left\{ (x_1\,:\,x_2\,:\,1) \in L(k) \right\} = \left\{ \left((1+5x^2)u_0 - 10xv_0 \,:\, (1+5x^2)v_0 - 2xu_0 \,:\, 1 - 5x^2\right)\,:\, x\in\PP(k)\right\}.
  \end{aligned}
\end{equation*}
It will be convenient for us to write this parametrization in terms of elements of $k(\sqrt{5})$. Define $\eta = u_0 + v_0\sqrt{5} \in k(\sqrt{5})/k$. The parametrization above can then be expressed as
$$\left\{ (x_1\,:\,x_2\,:\,1) \in L(k) \right\} = \left\{ \left(u \,:\, v \,:\, 1\right)\,:\, u + \sqrt{5}v = \eta\frac{1 - x\sqrt{5}}{1 + x\sqrt{5}},\,x\in\PP(k)\right\}.$$
Let $\mu = m + n\sqrt{5} \in k(\sqrt{5})/k$. Then one verifies that, when $x_3 = 1$, the reduced Mestre cubic $M$ can be written as
$$\tfrac{1}{2}\text{Tr}(\mu^2(x_1 + x_2\sqrt{5})^3) + 3N(\mu)N(x_1 + x_2\sqrt{5}) - N(\mu)\text{Tr}(\mu(x_1 + x_2\sqrt{5})^2) - 2N(\mu)(N(\mu)-5),$$
where $N$ and $\text{Tr}$ denote the norm and trace from $k(\sqrt{5})$ to $k$. We can now substitute the parametrization of the $k$-rational points of $L$ into $M$ to obtain a $k$-rational Weierstrass model for the associated genus $2$ curve $C$, as described in  \cref{thm:mestre-cubic}. This gives the $k$-rational model from  \cref{prop:model}.

Now suppose that $\sqrt{5} \in k$. It is possible to mimic the calculations from the case where $\sqrt{5}\not\in k$ by taking
$$(u_0\,:\,v_0\,:\,1) \coloneqq \left(\left(m^2 - 5n^2 - 5 - \tfrac{1}{20}\right)\!\sqrt{5} \,:\, m^2 - 5n^2 - 5 + \tfrac{1}{20} \,:\, 1\right)$$
and working in the ring $k[t]/(t^2 - 5)$. However, we get a tidier Weierstrass model by instead using the point $(\sqrt{5}\,:\,1\,:\,0)$ to parameterize $L(k)$. A straightforward calculation yields the $k$-rational model given in  \cref{prop:modelQsqrt5}.
\end{proof}

\begin{rem} Note that if we take $k=\Q(\sqrt 5)$ in 
\cref{prop:modelQsqrt5}, and $m \in \Q$, $n \in \sqrt 5 \Q \setminus \{ 0 \}$,
we get a 2-parameter family of genus 2 curves defined over $\Q$
which have potential RM-5, but not RM-5 defined over $\Q$.  
To see the RM-5 is not actually defined over $\Q$, one can check that  for
$m \in \Q$, $n \in \sqrt 5 \Q  \setminus \{ 0 \}$, Wilson's discriminant $\Delta'$
is in the square class of $n^2$, which is a non-square.
Hence the Igusa--Clebsch invariants are rational, but the
moduli points on $Y_-(5)$ are not rational, and so the RM-5
cannot be defined rationally.
(The irrationality of
these moduli points on $Y_-(5)$ is suggested by the fact
that $(m,n) \not \in \Q^2$ but is not \textit{a priori} implied by this
as $(m,n)$ are only coordinates for a birational model of $Y_-(5)$,
and we have not determined an explicit birational map from
$\PP^2_{m,n}$ to $Y_-(5)$).
\end{rem}

%
%

\section{Comparisons with known families}
\subsection{Mestre's family}
Let $f$ be the polynomial
$$f(a,b,x) = x^5 + (a-3)x^4 + (-a + b + 3)x^3 + (a^2 - a - 2b - 1)x^2 + bx + a,$$
and let $X(a,b)$ be the genus $2$ curve
$$X(a,b): y^2 = xf(a,b,x).$$
In \cite{mestre}, Mestre proves that $X(a,b)$ has RM-5 for every $a,b$ in $\C$ such that $xf(a,b,x)$ has six distinct zeroes, and that the RM is defined over 
$k = \Q(a,b)$.  Using Humbert's criterion for RM-5, Wilson \cite{wilson}
showed that this family of curves over $k$ gives all genus 2 curves with RM-5
over $k$ which have a Weierstrass point in $k$, up to $k$-isomorphism.  
In particular, for any genus $2$ curve $C$ with RM-5, there exist $a,b \in \C$ such that $C$ is $\C$-isomorphic to $X(a,b)$.  See also \cite{sakai}
for an alternative proof of this last result.

Define $g_{a,b}$ and $h_{a,b}$ as
\begin{align*}
  &g_{a,b} = \frac{2(3a^3 - 8a^2 - 5ab - b^2 - 3a)}{3(a^2 - 5a - 2b + 1)^2}\\
  &\text{and}\\
  &h_{a,b} = \frac{-a^2(4a^5 - 4a^4 - 24a^3b - a^2b^2 - 40a^3 + 34a^2b + 30ab^2 + 4b^3 + 91a^2 + 14ab - b^2 - 4a)}{2(a^2 - 5a - 2b + 1)^5}.
\end{align*}
Then, by comparing Igusa--Clebsch invariants, one can verify that $X(a,b)$ is $\C$-isomorphic to a genus $2$ curve associated to $(g,h) = (g_{a,b}, h_{a,b})$ in the Elkies--Kumar model \eqref{eq:EK}, assuming
$ a^2 - 5a - 2b + 1 \ne 0$ and $h_{a,b} \ne 0$.

Since choices of $a, b \in k$ can only yield genus 2 curves with RM-5 over 
$k$ which have a $k$-rational Weierstrass point, one cannot easily describe
all genus 2 curves with RM-5 over $k$ using Mestre's family.
For example, there are no rational values of $a$ and $b$ for which 
$X(a,b)$ is $\C$-isomorphic to the genus $2$ curve associated to 
$(g,h) = (-\tfrac{4}{15}, \tfrac{16}{3125})$.

\subsection{Brumer's family}
Brumer constructed a family of curves 
$C_{b,c,d}$ defined by
\begin{align*}
  C_{b,c,d}: y^2 + (x^3 + x + 1 + c(x^2 + x))y =\,\,&b + (1 + 3b)x + (1 - bd + 3b)x^2\\
  +&(b - 2bd - d)x^3 - bdx^4,
\end{align*}
and showed that if $C_{b,c,d}$ is nonsingular, then it is a genus
2 curve with RM-5 over $\Q(b,c,d)$.  Moreover, every genus $2$ 
curve with RM-5 is $\C$-isomorphic to $C_{b,c,d}$ for some $b,c,d \in \C$.
Brumer did not publish the details of his proof 
(see \cite{brumer} for an announcement), but the above statements
were reproved by different methods in \cite{hashimoto} and 
\cite{hashimoto-sakai}.

Define $g_{b,c,d}$ and $h_{b,c,d}$ as
\begin{align*}
  &g_{b,c,d} = \frac{-c^4 + 8bc^2d - 16b^2d^2 + 6c^3 - 24bcd + 24bc + c^2 - 68bd - 24cd - 108b - 30c - 36d - 61}{6(c^2 - 4bd - 2b - 3c - 2d - 5)^2}\\
\end{align*}
and
\begin{align*}
  h_{b,c,d} = \,&(c^2 - 4bd - 2b - 3c - 2d - 5)^{-5}  \left(bc^6d - 12b^2c^4d^2 + 48b^3c^2d^3 - 64b^4d^4 - b^2c^4d - 9bc^5d \right.\\
  &+ 8b^3c^2d^2 + 72b^2c^3d^2 - bc^4d^2 - 16b^4d^3 - 144b^3cd^3 + 8b^2c^2d^3 - 16b^3d^4 + bc^5 - 40b^2c^3d\\
  &+ 12bc^4d - c^5d + 144b^3cd^2 - 152b^2c^2d^2 + 52bc^3d^2 + 416b^3d^3 - 192b^2cd^3 - b^2c^3 - 9bc^4\\
  &+ 36b^3cd + 334b^2c^2d + 63bc^3d + 6c^4d + 24b^3d^2 + 132b^2cd^2 - 80bc^2d^2 + c^3d^2 + 528b^2d^3\\
  &- 36bcd^3 - 27b^2c^2 + 13bc^3 - c^4 + 108b^3d - 720b^2cd + 74bc^2d + 5c^3d - 456b^2d^2 - 96bcd^2\\
  &- 36c^2d^2 + 216bd^3 + 27b^3 + 252b^2c + 56bc^2 + 6c^3 - 66b^2d - 627bcd - 43c^2d - 381bd^2\\
  &- \left.63cd^2 + 27d^3 - 567b^2 + 27bc + 4c^2 - 121bd - 147cd - 81d^2 - 484b - 39c - 34d - 103\right).
\end{align*}
Then, by comparing Igusa--Clebsch invariants, one can verify that $C_{b,c,d}$ is $\C$-isomorphic to the genus $2$ curve associated to $(g,h) = (g_{b,c,d}, h_{b,c,d})$ in the Elkies--Kumar model \eqref{eq:EK} when $c^2 - 4bd - 2b - 3c - 2d - 5 \ne 0$ and $h_{b,c,d} \ne 0$.

One can ask if Brumer's family provides a way to describe all
 genus 2 curves $C$ with RM-5 defined over $k$.  However, it
 is not clear whether these will all come from a $k$-rational
 choice of parameters $b, c, d$.  E.g., if $(z,g,h)$ is a generic
 rational point on $Y$ such that $30g+4$ is a norm from $\Q(\sqrt 5)$,
 it is not clear if we can write $(g,h) = (g_{b,c,d}, h_{b,c,d})$ for
 some $b, c, d \in \Q$.
 
 While Brumer's models are simpler than what we give
 in \cref{sec:models}, over $\Q$ they might not comprise all 
 rational curves $C$ with RM-5, even generically.  Moreover
 there is no simple description of which choices of $b, c, d$
 will give $\C$-isomorphic curves.

%
%

\section{Beyond RM-5} \label{sec:otherD}
The Hilbert modular surface $Y_-(D)$ is rational if and only if $D$ is one of $5, 8, 12, 13,$ or $17$. One might wonder if there are analogues of \cref{thm:main} for each of these discriminants. Numerical experimentation suggests that the answer is yes.

Define
\begin{align*}
  &p_5(m,n) = -m^2 + 5n^2 + 5,\\
  &p_8(m,n) = m+1,\\
  &p_{12}(m,n) = -27m^2 + n^2 + 27,\\
  &p_{13}(m,n) = 1803m^2 - 72mn + n^2 + 3168m - 1440n - 768,\,\text{and}\\
  &p_{17}(m,n) = 1.
\end{align*}
In \cite{EK}, Elkies and Kumar give rational models of $Y_-(D)$ for all fundamental discriminants between $1$ and $100$. The polynomials $p_D(m,n)$ above are all factors of the discriminant of the Mestre conic one obtains when using Igusa--Clebsch invariants from \cite{EK} in the construction given in \cref{section:mestre-conic-background}. We chose several thousand values of $(m,n) \in \Q^2$ at random, and for each of these the associated Mestre conic was equivalent to $x_1^2 - Dx_2^2 - p_D(m,n)x_3^2 = 0$ over $\Q$ whenever it was nonsingular. In particular, the Mestre obstruction appears to vanish generically for $D = 17$, which is quite surprising.

We have attempted using the methods from
in \cref{sec:reduction} to reduce the Mestre conics for these other
values of $D$, but thus far have only been partially successful
in removing the other polynomial factors from the discriminant.

%
%

\begin{bibdiv}
\begin{biblist}

\bib{brumer}{article}{
   author={Brumer, Armand},
   title={The rank of $J_0(N)$},
   note={Columbia University Number Theory Seminar (New York, 1992)},
   journal={Ast\'{e}risque},
   number={228},
   date={1995},
   pages={3, 41--68},
   issn={0303-1179},
}

\bib{cardona-quer}{article}{
   author={Cardona, Gabriel},
   author={Quer, Jordi},
   title={Field of moduli and field of definition for curves of genus 2},
   conference={
      title={Computational aspects of algebraic curves},
   },
   book={
      series={Lecture Notes Ser. Comput.},
      volume={13},
      publisher={World Sci. Publ., Hackensack, NJ},
   },
   date={2005},
   pages={71--83},
}

\bib{CMSV}{article}{
   author={Costa, Edgar},
   author={Mascot, Nicolas},
   author={Sijsling, Jeroen},
   author={Voight, John},
   title={Rigorous computation of the endomorphism ring of a Jacobian},
   journal={Math. Comp.},
   volume={88},
   date={2019},
   number={317},
   pages={1303--1339},
   issn={0025-5718},
}

\bib{elkies}{article}{
   author={Elkies, Noam D.},
   title={Shimura curve computations via $K3$ surfaces of N\'{e}ron-Severi rank
   at least 19},
   conference={
      title={Algorithmic number theory},
   },
   book={
      series={Lecture Notes in Comput. Sci.},
      volume={5011},
      publisher={Springer, Berlin},
   },
   date={2008},
   pages={196--211},
}

\bib{EK}{article}{
   author={Elkies, Noam},
   author={Kumar, Abhinav},
   title={K3 surfaces and equations for Hilbert modular surfaces},
   journal={Algebra Number Theory},
   volume={8},
   date={2014},
   number={10},
   pages={2297--2411},
   issn={1937-0652},
}

\bib{hashimoto}{article}{
   author={Hashimoto, Ki-ichiro},
   title={On Brumer's family of RM-curves of genus two},
   journal={Tohoku Math. J. (2)},
   volume={52},
   date={2000},
   number={4},
   pages={475--488},
   issn={0040-8735},
}

\bib{hashimoto-sakai}{article}{
   author={Hashimoto, Kiichiro},
   author={Sakai, Yukiko},
   title={General form of Humbert's modular equation for curves with real
   multiplication of $\Delta=5$},
   journal={Proc. Japan Acad. Ser. A Math. Sci.},
   volume={85},
   date={2009},
   number={10},
   pages={171--176},
   issn={0386-2194},
}

\bib{KW}{article}{
   author={Khare, Chandrashekhar},
   author={Wintenberger, Jean-Pierre},
   title={Serre's modularity conjecture. I},
   journal={Invent. Math.},
   volume={178},
   date={2009},
   number={3},
   pages={485--504},
   issn={0020-9910},
}

\bib{KM}{article}{
   author={Kumar, Abhinav},
   author={Mukamel, Ronen E.},
   title={Real multiplication through explicit correspondences},
   journal={LMS J. Comput. Math.},
   volume={19},
   date={2016},
   number={suppl. A},
   pages={29--42},
}

\bib{magma}{article}{
   author={Bosma, Wieb},
   author={Cannon, John},
   author={Playoust, Catherine},
   title={The Magma algebra system. I. The user language},
   note={Computational algebra and number theory (London, 1993)},
   journal={J. Symbolic Comput.},
   volume={24},
   date={1997},
   number={3-4},
   pages={235--265},
   issn={0747-7171},
   label={Magma}
}

\bib{mestre:family}{article}{
   author={Mestre, J.-F.},
   title={Familles de courbes hyperelliptiques \`a multiplications r\'{e}elles},
   language={French},
   conference={
      title={Arithmetic algebraic geometry},
      address={Texel},
      date={1989},
   },
   book={
      series={Progr. Math.},
      volume={89},
      publisher={Birkh\"{a}user Boston, Boston, MA},
   },
   date={1991},
   pages={193--208},
}

\bib{mestre}{article}{
   author={Mestre, Jean-Fran\c{c}ois},
   title={Construction de courbes de genre $2$ \`a partir de leurs modules},
   language={French},
   conference={
      title={Effective methods in algebraic geometry},
      address={Castiglioncello},
      date={1990},
   },
   book={
      series={Progr. Math.},
      volume={94},
      publisher={Birkh\"{a}user Boston, Boston, MA},
   },
   date={1991},
   pages={313--334},
}

\bib{ribet}{article}{
   author={Ribet, Kenneth A.},
   title={Abelian varieties over $\bf Q$ and modular forms},
   conference={
      title={Modular curves and abelian varieties},
   },
   book={
      series={Progr. Math.},
      volume={224},
      publisher={Birkh\"{a}user, Basel},
   },
   date={2004},
   pages={241--261},
}

\bib{sage}{manual}{
      author={Developers, The~Sage},
       title={{S}agemath, the {S}age {M}athematics {S}oftware {S}ystem
  ({V}ersion 9.4)},
        date={2021},
        label={Sage},
        note={{\tt https://www.sagemath.org}},
}

\bib{sakai}{article}{
   author={Sakai, Yukiko},
   title={Poncelet's theorem and curves of genus two with real
   multiplication of $\Delta=5$},
   journal={J. Ramanujan Math. Soc.},
   volume={24},
   date={2009},
   number={2},
   pages={143--170},
   issn={0970-1249},
}

\bib{vdG}{book}{
   author={van der Geer, Gerard},
   title={Hilbert modular surfaces},
   series={Ergebnisse der Mathematik und ihrer Grenzgebiete (3) [Results in
   Mathematics and Related Areas (3)]},
   volume={16},
   publisher={Springer-Verlag, Berlin},
   date={1988},
   pages={x+291},
   isbn={3-540-17601-2},
}

\bib{wilson}{article}{
   author={Wilson, John},
   title={Explicit moduli for curves of genus 2 with real multiplication by
   ${\bf Q}(\sqrt5)$},
   journal={Acta Arith.},
   volume={93},
   date={2000},
   number={2},
   pages={121--138},
   issn={0065-1036},
}

\end{biblist}
\end{bibdiv}

\end{document}